%
%
%
%
\documentclass{amsart}

\usepackage{graphicx}
\usepackage[all]{xy}
\usepackage{enumitem}
\usepackage{color}
\usepackage{hyperref}
\usepackage{amssymb}

\usepackage{etoolbox}
\apptocmd{\sloppy}{\hbadness 10000\relax}{}{}

\theoremstyle{definition}
\newtheorem{theorem}{Theorem}[section]
\newtheorem{definition}[theorem]{Definition}
\newtheorem{lemma}[theorem]{Lemma}
\newtheorem{corollary}[theorem]{Corollary}
\newtheorem{proposition}[theorem]{Proposition}

\theoremstyle{remark}
\newtheorem{remark}[theorem]{Remark}

\newtheorem*{remark*}{Remark}
\newtheorem*{remarks*}{Remarks}

\numberwithin{equation}{section}



\newcommand{\Z}{\mathbb{Z}}
\newcommand{\Q}{\mathbb{Q}}

\newcommand{\Err}{\mathcal{R}}
\newcommand{\Eff}{\mathcal{F}}
\newcommand{\Mu}{\text{M}}
\newcommand{\rk}{\mathrm{rk}}
\newcommand{\chileq}{\leq_{\chi}}

\begin{document}

\title{Ribbon Cobordisms Between Lens Spaces}

\author{Marius Huber}
\address{Department of Mathematics, Boston College, Chestnut Hill, MA 02467}
\email{marius.huber@bc.edu}
\thanks{The author was partially supported by the Swiss National Science Foundation (grant nr. 191342)}





\begin{abstract}
We determine when there exists a ribbon rational homology cobordism between two connected sums of lens spaces, i.e. one without $3$-handles.
In particular, we show that if a lens space $L$ admits a ribbon rational homology cobordism to a different lens space, then $L$ must be homeomorphic to $L(n,1)$, up to orientation-reversal.
As an application, we classify ribbon $\chi$-concordances between connected sums of $2$-bridge links.
Our work builds on Lisca's work on embeddings of linear lattices.
\end{abstract}

\maketitle


\section{Introduction}\label{intro}

Given oriented rational homology $3$-spheres $Y_{1},Y_{2}$, a \emph{rational homology cobordism} from $Y_{1}$ to $Y_{2}$ is an oriented $4$-manifold $W$ with  boundary $\partial W=-Y_{1}\amalg Y_{2}$, such that the inclusion maps $\iota_{i}\colon Y_{i}\to W$ induce isomorphisms $H_{*}(Y_{i};\Q)\cong H_{*}(W;\Q)$, $i=1,2$.
As an example, if $C=S^{1}\times I\subset S^{3}\times I$ is a smooth concordance from $K_{1}=C\cap S^{3}\times\{0\}$ to $K_{2}=C\cap S^{3}\times\{1\}$, then $\Sigma_{2}(S^{3}\times I,C)$ (the double cover of $S^{3}\times I$ branched along $C$) is a rational homology cobordism from $\Sigma_{2}(S^{3},K_{1})$ to $\Sigma_{2}(S^{3},K_{2})$.
The non-existence of a rational homology cobordism from $\Sigma_{2}(S^{3},K_{1})$ to $\Sigma_{2}(S^{3},K_{2})$ thus obstructs $K_{1}$ and $K_{2}$ from being smoothly concordant.
Endowing the set of rational homology $3$-spheres, considered up to rational homology cobordism, with the operation of connected sum yields the rational homology cobordism group $\Theta_{\Q}^{3}$.
The study of this group gives rise to natural classification problems such as the question of which rational homology $3$-spheres bound rational homology balls \cite{Casson-Harer}, a question which has been answered by Lisca \cite{Lisca1} for the case of lens spaces.

The present paper deals with the following refined notion of rational homology cobordism that first appeared in a paper by Daemi, Lidman, Vela-Vick and Wong \cite{Lidman_etc}.

\begin{definition}\label{rhc}
Suppose that $Y_{1}$ and $Y_{2}$ are rational homology $3$-spheres.
A rational homology cobordism $W$ from $Y_{1}$ to $Y_{2}$ is said to be \emph{ribbon} if $W$ admits a handle decomposition relative to $Y_{1}\times I$ that uses $1$- and $2$-handles only.
If such a cobordism exists, we write $Y_{1}\leq Y_{2}$.
\end{definition}

\begin{remark*}
In the remainder of this paper, we will refer to ribbon rational homology cobordisms simply as \emph{ribbon cobordisms}.
\end{remark*}

Similarly to the above example, one can show that if $C\subset S^{3}\times I$ is a ribbon concordance from $K_{1}$ to $K_{2}$, meaning that $C$ has no local maxima with respect to the second coordinate of $S^{3}\times I$, then $\Sigma_{2}(S^{3}\times I,C)$ is a ribbon cobordism from $\Sigma_{2}(S^{3},K_{1})$ to $\Sigma_{2}(S^{3},K_{2})$.
Recently, Zemke \cite{Zemke} showed that knot Floer homology can be used to obstruct the existence of ribbon concordances.
Zemke's work was in part motivated by a conjecture of Gordon \cite[Conjecture 1.1]{Gordon}, which states that if $K_{1},K_{2}\subset S^{3}$ are knots with the property that there exists a ribbon concordance from $K_{1}$ to $K_{2}$ and vice versa, then $K_{1}$ and $K_{2}$ are isotopic.
Subsequently, several authors have used various versions of knot homologies to prove results that are similar in spirit to Zemke's result (see e.g. \cite{Levine-Zemke}, \cite{JMZ} and \cite{Sarkar}).

In analogy to the study of the rational homology cobordism group, a naturally arising problem is to classify which $3$-manifolds can be obtained by a ribbon cobordism from another $3$-manifold.
In this paper, we address this question for the family of lens spaces.

Before stating our results, we need to review some notation.
The question of when two lens spaces cobound a rational homology cobordism that is not necessarily ribbon has been completely answered by Lisca in a series of two papers \cite{Lisca1, Lisca2}.
More precisely, Lisca defines a set of rational numbers $\Err$ with the property that the lens space $L(p,q)$, $p>q>0$, bounds a rational homology ball (or, equivalently, is rational homology cobordant to $S^{3}$) if and only if $p/q\in\Err$.\footnote{Although we will not use the precise form of $\Err$, we remark that its original definition \cite[Definition 1.1]{Lisca1} is incomplete; see e.g. the footnote in \cite[p. 247]{Lecuona}}
Indeed, Lisca shows that $L(p,q)$, $p>q>0$ with $p/q\in\Err$, bounds a ribbon rational homology ball (\cite[Theorem 1.2]{Lisca1}).
In \cite{Lisca2}, Lisca moreover defines the sets
$$
\Eff_{n}=\left\lbrace\frac{nm^{2}}{nmk+1} \,\middle|\ m>k>0, \gcd(m,k)=1\right\rbrace\subset\Q,\, n\geq 2,
$$
and classifies connected sums of lens spaces that bound a rational homology ball (\cite[Theorem 1.1]{Lisca2}).
We refer the reader to \cite{Lisca2} for a precise statement of the result, but we remark that the sets $\Eff_{n}$, $n\geq 2$, together with the set $\Err$, can be regarded as the building blocks of the statement of that classification.

Building on the machinery that Lisca sets up to prove his classification, we determine when there exists a for ribbon cobordisms between two lens spaces.

\begin{theorem}\label{thm1}
Suppose that $L(p_{1},q_{1})\leq L(p_{2},q_{2})$.
Then, up to simultaneous orientation reversal of $L(p_{1},q_{1})$ and $L(p_{2},q_{2})$, one of the following holds:
\begin{enumerate}
\item $L(p_{1},q_{1})\cong L(p_{2},q_{2})$;
\item $L(p_{1},q_{1})\cong L(n,1)$ and $p_{2}/q_{2}\in\Eff_{n}$, for some $n\geq 2$; or
\item $L(p_{1},q_{1})\cong S^{3}$ and $p_{2}/q_{2}\in\Err$.
\end{enumerate}
Conversely, in each of these cases $L(p_{1},q_{1})\leq L(p_{2},q_{2})$ holds.
\end{theorem}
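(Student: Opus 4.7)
For the reverse direction, my plan is to exhibit explicit ribbon cobordisms in each case. Case~(1) is trivial via the product $L(p_{1},q_{1})\times I$. Case~(3) is essentially a restatement of Lisca's \cite[Theorem 1.2]{Lisca1}: when $p_{2}/q_{2}\in\Err$, Lisca constructs a rational homology ball bounded by $L(p_{2},q_{2})$ using only $0$-, $1$- and $2$-handles, which reads directly as a ribbon cobordism from $S^{3}$ to $L(p_{2},q_{2})$. For Case~(2), I would extract an analogous construction from Lisca's work in \cite{Lisca2}: his ribbon rational homology balls for connected sums of lens spaces in the $\Eff_{n}$ families carry handle decompositions from which one can read off a ribbon cobordism $L(n,1)\leq L(p_{2},q_{2})$ when $p_{2}/q_{2}\in\Eff_{n}$.

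For the forward direction, suppose $W$ is a ribbon cobordism from $L(p_{1},q_{1})$ to $L(p_{2},q_{2})$. The main step is to produce a closed negative-definite $4$-manifold by capping off both ends of $W$. Let $M_{1}$ be the negative-definite linear plumbing bounded by $L(p_{1},q_{1})$ coming from a continued fraction expansion, and let $M_{2}$ be the positive-definite linear plumbing bounded by $L(p_{2},q_{2})$. Because $W$ is a rational homology cobordism between rational homology spheres, $H_{2}(W;\Q)=0$, and so the closed $4$-manifold
\[
X \;=\; M_{1}\cup_{L(p_{1},q_{1})} W \cup_{L(p_{2},q_{2})} -M_{2}
\]
has intersection form $Q_{M_{1}}\oplus(-Q_{M_{2}})$ on $H_{2}(X;\Q)$, which is negative definite. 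Donaldson's diagonalization theorem then yields an embedding into $(\Z^{N},-\mathrm{Id})$, equivalently an embedding of the positive-definite linear lattice $(-Q_{M_{1}})\oplus Q_{M_{2}}$ into $\Z^{N}$. This is exactly the kind of lattice-embedding problem analyzed in \cite{Lisca1,Lisca2}.

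From here the argument becomes combinatorial, and the bulk of the work consists in identifying which of these embeddings actually come from a ribbon cobordism. Beyond the constraints enforced on a general rational homology cobordism, the ribbon hypothesis contributes further rigidity through the absence of $3$-handles: turning $W$ upside down, one sees that the inclusion $L(p_{2},q_{2})\hookrightarrow W$ is surjective on $H_{1}$, so $H_{1}(W;\Z)$ is a cyclic quotient of $\Z/p_{2}\Z$. Translating this topological constraint into a combinatorial one on the lattice embedding should force the pair $(L(p_{1},q_{1}),L(p_{2},q_{2}))$ into one of the three configurations of the theorem; as a sanity check, combining this surjectivity with the standard order relation $|H_{1}(W;\Z)|^{2}\cdot|H_{2}(W;\Z)|^{2} = p_{1}p_{2}$ for rational homology cobordisms already forces $p_{1}\mid p_{2}$, which is consistent with all three cases. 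The main obstacle is precisely this refinement of Lisca's case analysis: his embedding classifications in \cite{Lisca1,Lisca2} are already intricate, and isolating the subclass compatible with a handle decomposition lacking $3$-handles will require sharpening his techniques rather than invoking them as a black box.
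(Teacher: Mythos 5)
Your reverse direction is fine and matches the paper, and your general strategy for the forward direction (cap off both ends with linear plumbings, apply Donaldson, then analyze the resulting lattice embedding via Lisca's machinery) is exactly the paper's. However, the proposal stops at precisely the point where the real work begins, and the gap you acknowledge is genuine and substantial.

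The crucial missing step is the translation of the ribbon hypothesis into a lattice-theoretic condition you can actually use. You observe correctly that turning $W$ upside down gives $H_{1}(L(p_{2},q_{2}))\twoheadrightarrow H_{1}(W)$, but you do not extract from this the key conclusion: the ribbon condition forces $\varphi(\Lambda_{1})$ to be the \emph{entire} orthogonal complement $\varphi(\Lambda_{2})^{\perp}$ inside $\Z^{N}$, equivalently $\varphi(\Lambda_{1})\subset\Z^{N}$ is a primitive sublattice. (This is the paper's Lemma~\ref{riblemma}, proved via the long exact sequence of the pair $(X,M_{1})$ and the vanishing of $H^{3}(X,M_{1};\Z)\cong H_{1}(W\cup X_{2};\Z)$, which uses both that $W$ has no $3$-handles and that $X_{2}$ is simply connected.) Without this rigid identification of $\varphi(\Lambda_{1})$ as an orthogonal complement, Lisca's classification of embeddings gives you no handle on which lens space $L(p_{1},q_{1})$ actually is. And even granting the primitivity, you still need a nontrivial computation: when the linear subset corresponding to $\varphi(\Lambda_{2})$ is a bad component of the embedding, the stable isometry type of its orthogonal complement must be computed and shown to be $\Lambda(n/(n-1))$, unchanged by $2$-final expansions (Lemma~\ref{T_emb}); this is what forces $L(p_{1},q_{1})\cong L(n,1)$ in case~(2). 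You also need to handle the reducible case, where $\varphi$ splits and primitivity together with irreducibility of each piece forces $\varphi(\Lambda_{1})\cong\Z^{N_{1}}$, hence $L(p_{1},q_{1})\cong S^{3}$. None of this is in the proposal.

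Two smaller issues: the ``sanity check'' relation $|H_{1}(W;\Z)|^{2}\cdot|H_{2}(W;\Z)|^{2}=p_{1}p_{2}$ is not a standard identity and is not correct as stated; and deriving $p_{1}\mid p_{2}$ from your surjectivity observation alone does not work --- you also need injectivity of $\pi_{1}(L(p_{1},q_{1}))\to\pi_{1}(W)$, which is the nontrivial content of \cite[Proposition 2.1]{Lidman_etc} (this is how the paper's Lemma~\ref{homlemma} is proved). Surjectivity on the top end gives only $|H_{1}(W)|\mid p_{2}$.
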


\begin{remarks*}
\leavevmode
\begin{enumerate}
\item The above theorem can be interpreted as saying that the only lens spaces that admit a ribbon cobordism to a different lens space are the lens spaces of the form $L(n,1)$, for some $n\geq 1$.
\item While the above result makes no statement about the uniqueness of ribbon cobordisms between lens spaces, it follows from a combination of \cite[Lemma 3.5]{Lisca2} and \cite[Corollary 1.3]{Baker-Buck-Lecuona} that cases (2) and (3) above can be realized by a ribbon cobordism which is minimal in the sense that it uses just one $1$-handle and, consequently, one $2$-handle.
\item Using the fact that the double cover of $S^{3}$ branched along a $2$-bridge knot is a lens space (cf. Section \ref{cor_section}), Theorem \ref{thm1} may be used to provide an alternative proof of the fact that \cite[Conjecture 1.1]{Gordon} is true for the family of $2$-bridge knots \cite[Theorem 1.2]{Gordon}.
\end{enumerate}
\end{remarks*}

Based on Theorem \ref{thm1}, we determine which pairs of connected sums of lens spaces cobound a ribbon cobordism.
Before stating the result, we make the following observation.
Suppose that $W$ and $W'$ are ribbon cobordisms from $Y_{1}$ to $Y_{2}$ and from $Y_{1}'$ to $Y_{2}'$, respectively, so that $W$ can be built by attaching $1$- and $2$-handles to $Y_{1}\times I$, and similarly for $W'$.
By attaching corresponding $1$- and $2$-handles to $(Y_{1}\times I)\natural(Y_{1}'\times I)$ outside of the region where the boundary connected sum takes place, we obtain a ribbon cobordism $W''$ from $Y_{1}\#Y_{1}'$ to $Y_{2}\#Y_{2}'$.

\begin{theorem}\label{thm2}
Suppose that $Y_{1}\leq Y_{2}$, where $Y_{i}$ is a finite connected sum of lens spaces, $i=1,2$.
Then there exists a ribbon cobordism $W$ from $Y_{1}$ to $Y_{2}$ that can be decomposed as a boundary connected sum of ribbon cobordisms in such a way that each summand is (possibly orientation-reversingly) homeomorphic to one between the following ordered pairs:
\begin{enumerate}
\item $(L(p,q),L(p,q))$, $p/q>1$;
\item $(L(n,1),L(p,q))$, $p/q\in\Eff_{n}$, for some $n\geq 2$;
\item $(S^{3},L(p,q))$, $p/q\in\Err$;
\item $(S^{3},L(p,p-q)\#L(p,q))$;
\item $(S^{3},L(n,n-1)\#L(p,q))$, $p/q\in\Eff_{n}$, for some $n\geq 2$;
\item $(S^{3},L(p_{1},p_{1}-q_{1})\#L(p_{2},q_{2}))$, $p_{i}/q_{i}\in\Eff_{n}$, $i=1,2$, for some $n\geq 2$; or
\item $(S^{3},L(p_{1},q_{1})\#L(p_{2},q_{2}))$, $p_{i}/q_{i}\in\Eff_{2}$, $i=1,2$.
\end{enumerate}
Conversely, if $(Y_{1},Y_{2})$ is any of the pairs from (1)--(7), then $Y_{1}\leq Y_{2}$ holds.
\end{theorem}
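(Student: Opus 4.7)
The converse direction is immediate from the observation preceding the theorem: iterating the boundary connected sum construction, it suffices to realize a ribbon cobordism for each of the seven building blocks. Case (1) is the product cobordism, while cases (2) and (3) are precisely the nontrivial cases of Theorem \ref{thm1}. For cases (4)--(7), the claim is that each of the displayed connected sums of at most two lens spaces bounds a ribbon rational homology $4$-ball, yielding a ribbon cobordism from $S^{3}$; these are extracted from the explicit ribbon fillings constructed in \cite{Lisca2}, using the identification $-L(p,q) \cong L(p,p-q)$.

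For the forward direction, I would use a capping-off and lattice-embedding argument modelled on the proof of Theorem \ref{thm1}. For each lens-space summand of $Y_{1}$ and of $-Y_{2}$ choose a linear plumbing exhibiting that summand as the boundary of a negative-definite $4$-manifold; the disjoint unions give negative-definite plumbings $X_{1}$ bounding $Y_{1}$ and $X_{2}$ bounding $-Y_{2}$. Since $W$ is a rational homology cobordism between rational homology spheres, $H_{2}(W;\Q)=0$, so the closed $4$-manifold $Z = X_{1} \cup_{Y_{1}} W \cup_{Y_{2}} X_{2}$ has intersection form equal to the orthogonal direct sum of those of $X_{1}$ and $X_{2}$, and is in particular negative-definite.

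Donaldson's theorem then identifies $H_{2}(Z;\Z)$ modulo torsion with the standard diagonal lattice $-\Z^{N}$, and the linear lattices of $X_{1}$ and $X_{2}$ embed as orthogonal sublattices, giving an embedding $\Lambda(X_{1}) \oplus \Lambda(X_{2}) \hookrightarrow -\Z^{N}$. This is exactly the class of embeddings classified in \cite{Lisca2}. Lisca's combinatorial analysis yields a canonical decomposition of any such embedding into irreducible blocks, each of which, after translation into topological data, involves either a single summand of $Y_{1}$ paired with a single summand of $Y_{2}$, or at most two summands of $Y_{2}$ paired against $S^{3}$ on the $Y_{1}$-side. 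The ribbon hypothesis on $W$ (the absence of $3$-handles) is what rules out the ``reversed'' patterns in which two summands of $Y_{1}$ would combine to meet $S^{3}$ on the $Y_{2}$-side, leaving precisely the configurations listed in (1)--(7).

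The main obstacle is this final combinatorial matching: to sift, inside Lisca's classification of embeddings of direct sums of linear lattices, exactly those blocks that are compatible with the $Y_{1}$-versus-$Y_{2}$ labelling imposed by the ribbon structure, and to show that no other patterns can occur. Once the block decomposition is established, the converse direction provides an explicit ribbon cobordism realizing each block, and these are assembled by boundary connected sum into the desired ribbon cobordism from $Y_{1}$ to $Y_{2}$.
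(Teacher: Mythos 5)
Your outline takes essentially the same route as the paper — cap off both ends, apply Donaldson's theorem, feed the resulting lattice embedding into Lisca's combinatorial machinery, and use ribbonness to tell the two boundary components apart — but it leaves the essential step unproved, and your description of how ribbonness enters the lattice picture is not quite right.

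The mechanism is not that ribbonness ``rules out reversed patterns''; rather, ribbonness forces $\varphi(\Lambda(X_{1}))$ to equal $\varphi(\Lambda(X_{2}))^{\perp}$ inside $\Z^{N}$, i.e.\ to be a \emph{primitive} sublattice. This is the content of Lemma~\ref{riblemma}, proved by running the long exact sequence of the pair $(Z,X_{1})$ and using that $W\cup_{Y_{2}}X_{2}$ can be built from $X_{2}$ without $1$-handles (turn $W$ upside down). With primitivity in hand, and with Lemma~\ref{reducible} (which, via \cite[Proposition~4.10]{Lisca2}, extracts an irreducible block $\varphi_{1}$ involving at most two summands), the case analysis goes: if $\varphi_{1}$ involves only $Y_{1}$-summands, primitivity together with Remark~\ref{rem_primitivity} forces $\varphi_{1}$ to be an isometry onto $\Z^{N_{1}}$, so those summands are $S^{3}$ and are discarded — the pattern is not ruled out, it is forced to be trivial; if it involves only $Y_{2}$-summands, you land in cases (3)--(7) via Proposition~\ref{2summands}; and if it involves exactly one summand from each side, applying the argument from the proof of Theorem~\ref{thm1} to $\varphi_{1}$ yields cases (1) or (2). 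You correctly flag the ``final combinatorial matching'' as the crux, but without the primitivity lemma there is no way to begin that matching — the Donaldson embedding by itself is completely symmetric between the $Y_{1}$- and $Y_{2}$-summands, and it is precisely Lemma~\ref{riblemma} that breaks the symmetry. Finally, note that the process must be iterated on $\widetilde{\varphi}$, and at each stage one may have to reverse orientation before re-applying Lemma~\ref{reducible}.
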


From Theorems \ref{thm1} and \ref{thm2} we derive two corollaries concerning the concordance of $2$-bridge links.
The proofs of those corollaries rely on the fact that any lens space $L(p,q)$ arises as the double cover of $S^{3}$ branched along the $2$-bridge link $K(p,q)$ (see Section \ref{cor_section}).

\subsection*{Outline of the proof and organization}
We give an overview of the argument we use to prove Theorems \ref{thm1} and \ref{thm2}.
Suppose that $W$ is a ribbon cobordism from $L(p,q)$ to $L(r,s)$, so that $\partial W=-L(p,q)\amalg L(r,s)$.
Both $-L(p,q)\cong L(p,p-q)$ and $L(r,s)$ bound positive-definite plumbings $X(p,p-q)$ and $X(r,s)$, respectively (see the paragraph preceding Definition \ref{linlattice}), with intersection lattices given by the linear lattices $\Lambda(p/(p-q))$ and $\Lambda(r/s)$, respectively (see Definition \ref{linlattice}).
By a standard argument using Donaldson's diagonalization theorem \cite[Theorem 1]{Donaldson}, we obtain a full-rank isometric embedding of lattices $\varphi\colon\Lambda(p/(p-q))\oplus\Lambda(r/s)\hookrightarrow\Z^{N}$, where $\Z^{N}$ denotes the standard positive-definite Euclidean lattice (see Section \ref{prelim}).
Crucially, the fact that $W$ is a \emph{ribbon} cobordism translates into the condition $\varphi(\Lambda(p/(p-q)))=\varphi(\Lambda(r/s))^{\perp}$, where the latter denotes the orthogonal complement to $\varphi(\Lambda(r/s))$ in $\Z^{N}$ (Lemma \ref{riblemma}).
Equivalently, $\varphi(\Lambda(p/(p-q)))\subset\Z^{N}$ is a \emph{primitive} sublattice, meaning that the quotient $\Z^{N}/\varphi(\Lambda(p/(p-q))$ contains no torsion.
Furthermore, it follows from Lisca's combinatorial machinery set up in \cite{Lisca1,Lisca2}, that the subset $S\subset\Z^{N}$ consisting of the images of the standard basis elements of $\Lambda(p/(p-q))\oplus\Lambda(r/s)$ can be obtained from a certain `minimal' such subset by repeatedly applying an operation called $2$-final expansion (see Subsection \ref{Embeddings_subs}).
In Lemma \ref{T_emb}, we show that the stable isometry type of $\varphi(\Lambda(r/s))^{\perp}$, in fact, remains unchanged under $2$-final expansions, i.e. $\varphi(\Lambda(r/s))^{\perp}$ only changes by adding orthogonal direct summands isometric to $\Z^{N}$.
Combined with Lisca's classification of lens spaces bounding rational balls, and standard facts about linear lattices, this allows us to deduce Theorem \ref{thm1}.

The proof of Theorem \ref{thm2} relies on Theorem \ref{thm1}, combined with the fact that if a connected sum of lens spaces bounds a rational homology ball, then the corresponding embedding of the orthogonal direct sum of the intersection lattices of the connected summands into $\Z^{N}$ can be decomposed into smaller embeddings which involve at most two of the direct summands (Lemma \ref{reducible}).

It is worth noting that our methods are similar in spirit, but, in a certain sense, complementary to those that Aceto, Celoria and Park used in \cite{ACP}.
Indeed, the proof of their main result relies on investigating the orthogonal complements of subsets $S\subset\Z^{N}$ as above that do not contain so-called bad components (in the terminology of Lisca).
In contrast, the subsets $S\subset\Z^{N}$ we are dealing with in the present article typically do contain bad components.

This paper is organized as follows.
In Section \ref{cor_section}, we prove two naturally derived corollaries to Theorems \ref{thm1} and \ref{thm2} concerning the $\chi$-concordance of $2$-bridge links.
In Section \ref{prelim}, we first review some generalities about integral lattices and some definitions coming from \cite{Lisca1,Lisca2}, after which we proceed to prove a few lemmas concerning embeddings of linear lattices and their connection to ribbon cobordisms.
Finally, in Sections \ref{proof1} and \ref{proof2}, we prove Theorems \ref{thm1} and \ref{thm2}, respectively.

\subsection*{Conventions and notation}

In order to minimize the number of signs, we go by the definition that $L(p,q)$ is the oriented $3$-manifold obtained by performing $p/q$-framed Dehn surgery along the unknot $U\subset S^{3}$.
The mirror image of a link $K\subset S^{3}$ is denoted by $\overline{K}$, and we write $K\simeq K'$ to denote that the links $K$ and $K'$ are isotopic.
Moreover, all manifolds in this paper are assumed to be oriented, so that $-Y$ stands for the oriented manifold obtained from $Y$ by reversing orientation, and we write $Y\cong Y'$ to indicate that $Y$ and $Y'$ are related by an orientation-preserving homeomorphism.

\subsection*{Acknowledgments}
I would like to thank to my advisor, Josh Greene, for suggesting this question and for the countless helpful conversations we had during the course of this project.
I would also like to thank Paolo Lisca for helpful feedback on an earlier version of this paper.
Lastly, I thank Daniele Celoria for useful feedback and his openly available program to compute $d$-invariants of lens spaces, which led to conjecturing Theorem \ref{thm1}.

\section{Applications to the \texorpdfstring{$\chi$}--concordance of 2-bridge links}\label{cor_section}

Recall that the family of $2$-bridge links can be parametrized by pairs of coprime integers $p,q\in\Z$ in such a way that $K(p,p-q)\simeq\overline{K(p,q)}$, $\Sigma_{2}(S^{3},K(p,q))\cong L(p,q)$, and, moreover, that $K(p,q)$ is a knot precisely when $p$ is odd (see e.g. \cite[Chapter 12]{Burde-Zieschang}).
Inspired by \cite[Definition 2]{Donald-Owens}, we make the following definition.
\begin{definition}\label{chi-ribbon}
Let $L_{0},L_{1}\subset S^{3}$ be links, and let $C\subset S^{3}\times I$ be a properly embedded surface satisfying $L_{i}=C\cap S^{3}\times\{i\}$, $i=0,1$.
We say that $C$ is a ribbon $\chi$-concordance from $L_{0}$ to $L_{1}$ if $\chi(C)=0$ and $C$ has no local maxima with respect to the second coordinate of $S^{3}\times I$.
If such a concordance exists, we write $L_{0}\chileq L_{1}$.
\end{definition}

Note that in the case where both $L_{0}$ and $L_{1}$ are knots, this notion coincides with the usual notion of ribbon concordance.
Furthermore, we remark that our definition is more general than \cite[Definition 2]{Donald-Owens}.
Indeed, that definition requires some decorations on the components of the links involved, which are necessary because connected sum is not well-defined for links, and, eventually, to endow the set of $\chi$-concordance classes with a group structure.
The statement of Corollary \ref{cor2}, however, holds true regardless of how one chooses to form the connected sum of the links involved.
Furthermore, \cite[Definition 2]{Donald-Owens} requires a $\chi$-concordance to have no closed components, which becomes redundant as soon as we demand that $C$ be ribbon.
Finally, we point out that we do not require $C$ to be orientable.
In fact, as will become apparent in the proof of Corollary \ref{cor1}, this flexibility is the reason why we choose to go by this definition of concordance, as opposed to one that requires a concordance between links to be a disjoint union of annuli.
Nevertheless, it is easily verified that the double cover of $S^{3}\times I$ branched along a ribbon $\chi$-concordance from $L_{0}$ to $L_{1}$ is a ribbon cobordism from $\Sigma_{2}(S^{3},L_{0})$ to $\Sigma_{2}(S^{3},L_{1})$.

\begin{corollary}\label{cor1}
Let $p_{i},q_{i}\in\Z$ be coprime, $i=1,2$.
Then, possibly after replacing both $K(p_{1},q_{1})$ and $K(p_{2},q_{2})$ by their mirror images, we have $K(p_{1},q_{1})\chileq K(p_{2},q_{2})$ if and only if one of the following holds:
\begin{enumerate}
\item
$K(p_{1},q_{1})\simeq K(p_{2},q_{2})$;
\item
$K(p_{1},q_{1})\simeq K(n,1)$ and $p_{2}/q_{2}\in\Eff_{n}$, for some $n\geq 2$; or
\item
$K(p_{1},q_{1})\simeq U$ and $p_{2}/q_{2}\in\Err$.
\end{enumerate}
\end{corollary}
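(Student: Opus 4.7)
The plan is to reduce the statement to Theorem \ref{thm1} by passing to double branched covers. For the forward implication, suppose $C\subset S^3\times I$ is a ribbon $\chi$-concordance from $K(p_1,q_1)$ to $K(p_2,q_2)$. As noted in the paragraph preceding the corollary, $\Sigma_2(S^3\times I,C)$ is then a ribbon cobordism from $\Sigma_2(S^3,K(p_1,q_1))\cong L(p_1,q_1)$ to $\Sigma_2(S^3,K(p_2,q_2))\cong L(p_2,q_2)$. Applying Theorem \ref{thm1} produces one of the three listed cases at the level of lens spaces, with simultaneous orientation-reversal of both boundary components allowed. I would then translate each case back using the facts recorded at the start of Section \ref{cor_section}: orientation-reversal of $L(p,q)$ corresponds to the mirror identity $K(p,p-q)\simeq\overline{K(p,q)}$, and $2$-bridge knots are determined up to isotopy by their double branched covers (Hodgson--Rubinstein, equivalently Schubert's classification). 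In this way, case (2) forces $K(p_1,q_1)\simeq K(n,1)$ (with $n$ necessarily odd so that $K(n,1)$ is a knot), and case (3) forces $K(p_1,q_1)\simeq U$.

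For the converse, each of (1)--(3) must be realized by an explicit ribbon $\chi$-concordance. Case (1) is immediate from the product cylinder. For cases (2) and (3), I would appeal to the second remark following Theorem \ref{thm1}: the required ribbon cobordism between the two lens spaces exists with exactly one $1$-handle and one $2$-handle. Such a minimal cobordism is expected to arise as the branched double cover of a single band-attachment surface $C\subset S^3\times I$ between the two $2$-bridge knots; this $C$ has Euler characteristic zero and no local maxima, and is therefore a ribbon $\chi$-concordance. The necessary band presentations can be read off from the explicit ribbon surfaces constructed by Lisca in \cite{Lisca1,Lisca2} and the constructions in \cite{Baker-Buck-Lecuona}. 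In particular, in case (3), Lisca's ribbon disk for $K(p,q)\subset S^3$ with $p/q\in\Err$ gives the $\chi$-concordance from $U$ to $K(p,q)$ directly.

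The main obstacle is the converse direction: ensuring that the minimal ribbon cobordism supplied by the remark following Theorem \ref{thm1} is genuinely equivariant with respect to the hyperelliptic involution on the lens-space boundaries, so that it descends to a ribbon $\chi$-concordance upstairs. Rather than prove equivariance abstractly, I would avoid this difficulty by reading the band diagrams directly out of the cited sources, thereby exhibiting the $\chi$-concordance on the base and passing \emph{up} to the cover. A secondary bookkeeping point is to reconcile the qualifier ``after replacing both $K(p_{1},q_{1})$ and $K(p_{2},q_{2})$ by their mirror images'' with the simultaneous orientation-reversal clause of Theorem \ref{thm1}; this is handled uniformly by applying $K(p,p-q)\simeq\overline{K(p,q)}$ at both ends of the concordance.
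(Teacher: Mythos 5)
Your argument tracks the paper's proof closely: the forward implication is exactly the passage to double branched covers followed by Theorem \ref{thm1}, and for the converse you correctly identify that one should build the ribbon $\chi$-concordance directly in $S^{3}\times I$ and then pass \emph{up} to the cover, which is precisely what the paper does via \cite[Lemma 3.5]{Lisca2} for case (2) (a ribbon move splitting $K(p_{2},q_{2})$ into $K(n,1)\amalg U$, then capping the unknot) and Lisca's ribbon surfaces from \cite{Lisca1} for case (3). One small but genuine slip: your parenthetical assertion that $n$ is necessarily odd so that $K(n,1)$ is a knot is false and not needed --- the corollary is about $2$-bridge \emph{links}, so $K(n,1)$ may be a $2$-component link when $n$ is even, and similarly in case (3) the ribbon surface bounded by $K(p_{2},q_{2})$ with $p_{2}/q_{2}\in\Err$ may be a disk together with a M\"obius band rather than just a disk; since Definition \ref{chi-ribbon} allows nonorientable $\chi$-concordances, the argument goes through unchanged, but the claim as written should be deleted.
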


\begin{proof}
If $K(p_{1},q_{1})\chileq K(p_{2},q_{2})$, then we have $L(p_{1},q_{1})\leq L(p_{2},q_{2})$.
Therefore, one of (1)--(3) from Theorem \ref{thm1} holds, and the claim follows.

Conversely, it is clear that $K(p_{1},q_{1})\chileq K(p_{2},q_{2})$ holds if $K(p_{1},q_{1})\simeq K(p_{2},q_{2})$.
Suppose that $K(p_{1},q_{1})\simeq K(n,1)$ and $p_{2}/q_{2}\in\Eff_{n}$, for some $n\geq 2$.
By \cite[Lemma 3.5]{Lisca2}, there exists a ribbon move turning $K(p_{2},q_{2})$ into a split link consisting of $K(n,1)$ and an unknot.
Capping off the unknot with a disk yields the desired ribbon $\chi$-concordance.
If $p_{2}/q_{2}\in\Err$, then, by the main theorem of \cite{Lisca1}, $K(p_{2},q_{2})$ bounds a properly embedded ribbon surface $C\subset B^{4}$ that is homeomorphic either to a disk or to the disjoint union of a disk with a M{\"o}bius band, depending on whether $K(p_{2},q_{2})$ is a knot or a link, respectively.
In either case, $\chi(C)=1$, so by removing a small disk from $C$ we obtain a ribbon $\chi$-concordance from $U$ to $K(p_{2},q_{2})$.
\end{proof}

Before stating the analogous corollary to Theorem \ref{thm2}, we observe that if $C$ and $C'$ are ribbon $\chi$-concordances from $K_{1}$ to $K_{2}$ and from $K_{1}'$ to $K_{2}'$, respectively, where $K_{i},K_{i}'\subset S^{3}$ are links, $i=1,2$, we can sum $C$ and $C'$ together along properly embedded intervals $J\subset C$, $J'\subset C'$ that are transverse to $S^{3}\times \{t\}\subset S^{3}\times I$ for all $t\in I$.
This yields a ribbon $\chi$-concordance $C''$ from $K_{1}\#K_{1}'$ to $K_{2}\#K_{2}'$.

\begin{corollary}\label{cor2}
Suppose that $K_{1}\chileq K_{2}$, where $K_{i}$ is a finite connected sum of $2$-bridge links, $i=1,2$.
Then there exists a ribbon $\chi$-concordance $C$ from $K_{1}$ to $K_{2}$ that can be decomposed as a sum of ribbon $\chi$-concordances in such a way that each summand is (possibly after mirroring) a ribbon $\chi$-concordance between one of the following ordered pairs:
\begin{enumerate}
\item $(K(p,q),K(p,q))$, $p/q>1$;
\item $(K(n,1),K(p,q))$, $p/q\in\Eff_{n}$, for some $n\geq 2$;
\item $(U,K(p,q))$, $p/q\in\Err$;
\item $(U,K(p,p-q)\#K(p,q))$;
\item $(U,K(n,n-1)\#K(p,q))$, $p/q\in\Eff_{n}$, for some $n\geq 2$;
\item $(U,K(p_{1},p_{1}-q_{1})\#K(p_{2},q_{2}))$, $p_{i}/q_{i}\in\Eff_{n}$, $i=1,2$, for some $n\geq 2$; or
\item $(U,K(p_{1},q_{1})\#K(p_{2},q_{2}))$, $p_{i}/q_{i}\in\Eff_{2}$, $i=1,2$.
\end{enumerate}
Conversely, if $(K_{1},K_{2})$ is any of the pairs from (1)--(7), then $K_{1}\leq K_{2}$ holds.
\end{corollary}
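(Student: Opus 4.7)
The proof follows the template of Corollary \ref{cor1}, with Theorem \ref{thm2} replacing Theorem \ref{thm1}. For the forward direction, the double branched cover carries a ribbon $\chi$-concordance from $K_{1}$ to $K_{2}$ to a ribbon cobordism from $Y_{1}=\Sigma_{2}(S^{3},K_{1})$ to $Y_{2}=\Sigma_{2}(S^{3},K_{2})$. Since double branched covers commute with connected sums (for appropriate basepoints) and $\Sigma_{2}(S^{3},K(p,q))\cong L(p,q)$, both $Y_{1}$ and $Y_{2}$ are finite connected sums of lens spaces. Theorem \ref{thm2} then produces a ribbon cobordism from $Y_{1}$ to $Y_{2}$ decomposing as a boundary connected sum with summands matching (1)--(7) of Theorem \ref{thm2}. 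Under $K(p,q)\leftrightarrow L(p,q)$, this gives a corresponding decomposition of $K_{1}$ and $K_{2}$ into connected sums of $2$-bridge links paired according to (1)--(7) of the present corollary, and the ribbon sum construction recalled immediately before the corollary glues the individual $\chi$-concordances (produced in the converse direction) into the desired global one.

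For the converse, we exhibit a ribbon $\chi$-concordance on each listed pair. Cases (1)--(3) are handled exactly as in the proof of Corollary \ref{cor1}: case (1) by the product concordance, case (2) via the ribbon move of \cite[Lemma 3.5]{Lisca2} that splits off an unknot from $K(p,q)$, which is then capped off by a disk, and case (3) by the ribbon surface of Euler characteristic $1$ produced in \cite[Theorem 1.1]{Lisca1}, from which we remove a small open disk. For each of the remaining cases (4)--(7) it likewise suffices to exhibit a properly embedded ribbon surface $C\subset B^{4}$ of Euler characteristic $1$ whose boundary is the second link in the pair; removing a small open disk from $C$ then produces the $\chi$-concordance from $U$. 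These surfaces are the link-level counterparts of the ribbon rational homology balls provided by Theorem \ref{thm2}: each rational homology ball there admits an explicit handle decomposition (essentially inherited from \cite{Lisca2}) whose double branched-cover description displays it as the branched cover of such a ribbon surface in $B^{4}$, and the Euler characteristic formula $\chi(C)=\chi(B^{4})-\tfrac{1}{2}(\chi(\text{ball})-\chi(B^{4}))=1$ gives the required count.

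The main obstacle is the last point: one needs to check, case by case in (4)--(7), that the ribbon rational homology balls provided by Theorem \ref{thm2} are indeed double branched covers of ribbon surfaces in $B^{4}$, and that those surfaces have Euler characteristic exactly $1$. The key auxiliary input is that the canonical ribbon moves behind \cite[Lemma 3.5]{Lisca2} and the fact that $\overline{K(p,q)}\#K(p,q)$ admits the standard mirror ribbon disk/surface together cover all the required configurations; once these explicit surfaces are in hand the rest of the argument is purely bookkeeping.
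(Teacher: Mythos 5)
Your forward direction is fine and matches the paper (the paper is slightly terser, but the idea — pass to double branched covers, apply Theorem \ref{thm2}, translate back under $K(p,q)\leftrightarrow L(p,q)$, and reassemble using the boundary sum construction together with the converse constructions — is the same). The converse for cases (1)--(3) also matches the paper.

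The gap is in the converse for cases (4)--(7). You try to derive the needed ribbon $\chi$-concordances \emph{from} the ribbon rational homology balls asserted in Theorem \ref{thm2}, by claiming each such ball is the double branched cover of a ribbon surface in $B^4$ with $\chi=1$. That claim is not established anywhere in the paper and is not automatic: knowing that a $4$-manifold is a ribbon rational homology ball bounded by $\Sigma_2(S^3,K)$ does not by itself exhibit a branched-cover structure over a ribbon surface cobounding $K$. Worse, in the paper's logical order, the converse of Proposition \ref{2summands} (hence the converse of Theorem \ref{thm2} for cases (4)--(7)) is itself \emph{obtained} by taking double branched covers of the $\chi$-concordances constructed in the converse of Corollary \ref{cor2}. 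So appealing to Theorem \ref{thm2} to produce the surfaces is circular.

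What the paper does instead is construct the surfaces directly and bootstrap. For case (4), $\overline{K(p,q)}\amalg K(p,q)$ bounds the standard union of one or two annuli (with no local maxima of the radial function) in $B^4$; after forming the connected sum this becomes a ribbon surface of Euler characteristic $1$ for $K(p,p-q)\#K(p,q)$, and puncturing gives $U\chileq K(p,p-q)\#K(p,q)$. For case (5), one first applies case (4) to get $U\chileq K(n,n-1)\#K(n,1)$, then uses \cite[Lemma 3.5]{Lisca2} (which gives a single ribbon move turning $K(p,q)$ into the split union of $K(n,1)$ and an unknot, hence a ribbon $\chi$-concordance $K(n,1)\chileq K(p,q)$ after capping off) to upgrade the second summand, obtaining $U\chileq K(n,n-1)\#K(n,1)\chileq K(n,n-1)\#K(p,q)$. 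Cases (6) and (7) are analogous: apply the mirror of Lemma 3.5 to the first summand in case (6), and note in case (7) that $K(2,1)\simeq\overline{K(2,1)}$, so case (4) gives $U\chileq K(2,1)\#K(2,1)$ and Lemma 3.5 is then applied to both summands. You gesture at exactly these ingredients in your final paragraph, but you present them as a ``check'' on an already-existing surface rather than as the construction itself; as written, the argument does not actually produce the required $\chi$-concordances for (5)--(7).
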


\begin{proof}
Let $C\subset S^{3}\times I$ be a ribbon $\chi$-concordance as in the statement of the theorem, so that $W=\Sigma_{2}(S^{3}\times I,C)$ is a ribbon cobordism from $Y_{1}$ to $Y_{2}$, where $Y_{i}=\Sigma_{2}(S^{3},K_{i})$ is a connected sum of lens spaces of corresponding parameters, $i=1,2$.
By Theorem \ref{thm2}, we may assume that $W$ is a boundary connected sum of the ribbon cobordisms listed there.
Together with the fact that $\Sigma_{2}(S^{3},K\#K')\cong\Sigma_{2}(S^{3},K)\#\Sigma_{2}(S^{3},K')$, this implies that $C$ must be of the desired form.

Conversely, it suffices to show that each of the cases (1)--(7) can be realized by a ribbon $\chi$-concordance.
Cases (1)--(3) follow from Corollary \ref{cor1}.
For case (4), note that $K(p,p-q)\amalg K(p,q)\simeq\overline{K(p,q)}\amalg K(p,q)\subset S^{3}$ bounds a disjoint union of $m$ annuli, properly embedded in $B^{4}$, where $m$ equals either $1$ or $2$, depending on whether $K(p,q)$ is a knot or a link, respectively.
Moreover, this disjoint union of annuli can be chosen not to have any local maxima with respect to the radial distance function on $B^{4}$.
It follows that $K(p,p-q)\#K(p,q)$ bounds a disjoint union of a disk and, possibly, an annulus with the same property.
Puncturing the disk yields a ribbon $\chi$-concordance from $U$ to $K(p,p-q)\#K(p,q)$.
For case (5), note that, by \cite[Lemma 3.5]{Lisca2} again, $K(p,q)$ can be turned into $K(n,1)$ by the reverse of a ribbon $\chi$-concordance, which, using case (4), shows that $U\chileq K(n,n-1)\#K(n,1)\chileq K(n,n-1)\#K(p,q)$.
The remaining cases are handled similarly.
\end{proof}

\section{Preliminaries}\label{prelim}

\subsection{Lattices}

An \emph{integral lattice} is a free Abelian group $\Lambda$ endowed with a symmetric, bilinear pairing $\langle\cdot,\cdot\rangle\colon\Lambda\times\Lambda\to\Z$.
We usually write $x\cdot y$ to mean $\langle x,y\rangle$, $x,y\in\Lambda$.

Two lattices $\Lambda_{1},\Lambda_{2}$ are \emph{isometric} if there exists an isomorphism $\varphi\colon\Lambda_{1}\to\Lambda_{2}$ that preserves the bilinear pairings, and we write $\Lambda_{1}\cong\Lambda_{2}$.
Moreover, we say that $\Lambda_{1}$ and $\Lambda_{2}$ are \emph{stably isometric} (denoted by $\Lambda_{1}\simeq\Lambda_{2}$) if $\Lambda_{1}\cong\Lambda_{2}\oplus\Z^{k}$ or $\Lambda_{1}\oplus\Z^{k}\cong\Lambda_{2}$ for some $k\geq 0$, where $\Z^{n}$ denotes the standard Euclidean lattice with basis $\{e_{1},\dots,e_{n}\}$ and pairing given by $e_{i}\cdot e_{j}=\delta_{ij}$.
To any integral lattice $\Lambda$ one can associate its \emph{dual lattice}
$\Lambda^{*}:=\{\xi\in\Lambda\otimes\Q\mid\langle\xi,x\rangle\in\Z\text{ for all }x\in\Lambda\}$, where $\langle\cdot,\cdot\rangle$ is extended to $\Lambda\otimes\Q$ by $\Q$-bilinearity.

Given a sublattice $\Lambda$ of $\Mu$, we define the \emph{orthogonal complement} of $\Lambda$ in $\Mu$ as $\Lambda^{\perp}=\{x\in\Mu\mid x\cdot y=0\text{ for all }y\in\Lambda\}$.\footnote{Throughout this paper, $\Mu$ denotes a capitalized $\mu$, whereas $M$ stands for a capitalized $m$, as usual.}
Finally, we say that a sublattice $\Lambda\subset\Mu$ is \emph{primitive} if the quotient $\Mu/\Lambda$ is a free Abelian group.
We establish two equivalent characterizations of primitivity that will be used later in the paper.
This should be well-known to experts, and we provide a proof for completeness.
\begin{lemma}\label{primitivity}
Let $\Mu$ be an integral lattice and $\Lambda\subset\Mu$ a sublattice.
Then the following are equivalent.
\begin{enumerate}
\item $\Mu/\Lambda$ is torsion-free.
\item The natural restriction map $r\colon\Mu^{*}\to\Lambda^{*}$ is surjective.
\item $(\Lambda\otimes\Q)\cap\Mu=\Lambda$.
\end{enumerate}
\end{lemma}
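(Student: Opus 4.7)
The plan is to organize the proof around (1), establishing $(1) \Leftrightarrow (3)$ directly, $(1) \Rightarrow (2)$ by splitting a short exact sequence, and $(2) \Rightarrow (1)$ by contrapositive via Smith normal form. The equivalence $(1) \Leftrightarrow (3)$ will be essentially definitional: a coset $x + \Lambda \in \Mu/\Lambda$ is torsion iff $nx \in \Lambda$ for some integer $n \geq 1$, iff $x = (nx)/n$ lies in $\Lambda \otimes \Q$. Since $x \in \Mu$ automatically, $\Mu/\Lambda$ is torsion-free iff $(\Lambda \otimes \Q) \cap \Mu \subseteq \Lambda$, and the reverse inclusion is automatic.

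For $(1) \Rightarrow (2)$, I would use that a torsion-free finitely generated abelian group is free, so $\Mu/\Lambda$ is free and the short exact sequence $0 \to \Lambda \to \Mu \to \Mu/\Lambda \to 0$ splits, giving $\Mu = \Lambda \oplus C$ as abelian groups for some complement $C$. Non-degeneracy of the pairing (which holds throughout the paper, since all relevant lattices are positive-definite) lets me identify $\Lambda^{*}$ and $\Mu^{*}$ with $\mathrm{Hom}(\Lambda, \Z)$ and $\mathrm{Hom}(\Mu, \Z)$ via $\xi \mapsto \langle \xi, \cdot \rangle$; under this identification $r$ becomes the restriction of homomorphisms, and any $\phi \in \mathrm{Hom}(\Lambda, \Z)$ extends to $\Mu$ simply by declaring it zero on $C$.

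For $(2) \Rightarrow (1)$ I would argue the contrapositive. Assuming $\Mu/\Lambda$ has torsion, Smith normal form produces bases $e_{1}, \ldots, e_{N}$ of $\Mu$ and $f_{1}, \ldots, f_{m}$ of $\Lambda$ with $f_{i} = d_{i} e_{i}$, positive divisors $d_{1} \mid \cdots \mid d_{m}$, and some $d_{i} > 1$. By non-degeneracy there is a unique $\psi \in \Lambda \otimes \Q$ with $\psi \cdot f_{j} = \delta_{ij}$ for all $j$, and this $\psi$ lies in $\Lambda^{*}$. Any $\xi \in \Mu^{*}$ with $r(\xi) = \psi$ would satisfy $\xi \cdot f_{i} = 1$, forcing $\xi \cdot e_{i} = 1/d_{i} \notin \Z$ and contradicting $\xi \in \Mu^{*}$.

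The main obstacle here is conceptual rather than technical: it amounts to correctly reconciling the two perspectives on $\Lambda^{*}$ (as a subgroup of $\Lambda \otimes \Q$ defined via the pairing, versus as $\mathrm{Hom}(\Lambda, \Z)$) and on $r$ (as orthogonal projection onto $\Lambda \otimes \Q$ versus as restriction of homomorphisms). Once this compatibility is established via non-degeneracy, the remaining work is a routine appeal to the structure theorem for finitely generated abelian groups.
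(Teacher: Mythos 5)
Your proof is correct.  The $(1)\Leftrightarrow(3)$ argument you give is essentially the same as the paper's.  Where you diverge is in $(1)\Leftrightarrow(2)$: the paper handles both directions at once with a single matrix computation, representing $\Lambda\subset\Mu$ by $A$, observing that $r$ is represented by $A^{\top}$ in dual bases, and reading off from Smith normal form that surjectivity of $A^{\top}$ is equivalent to $\Mu/\Lambda$ being torsion-free.  You instead split the two directions: $(1)\Rightarrow(2)$ via the structure theorem (free quotient $\Rightarrow$ split exact sequence $\Rightarrow$ extend functionals by zero on a complement), and $(2)\Rightarrow(1)$ by a contrapositive Smith normal form argument.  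The tradeoff is that your $(1)\Rightarrow(2)$ is more conceptual and avoids any matrix computation, at the cost of proving the two directions by different methods rather than exhibiting the duality between $A$ and $A^{\top}$ directly.  One thing you handle more carefully than the paper: both arguments implicitly require the pairing to be non-degenerate (so that $\Lambda^{*}$ and $\Mu^{*}$ can be identified with $\mathrm{Hom}(\Lambda,\Z)$ and $\mathrm{Hom}(\Mu,\Z)$ and the dual bases make sense), and you flag this explicitly and note it holds in the paper's positive-definite setting, whereas the paper leaves it implicit.
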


\begin{proof}
(1)$\Leftrightarrow$(2):
Set $n=\rk\Mu$, $m=\rk\Lambda$, and represent the embedding $\Lambda\subset\Mu$ by a matrix $A\in\Z^{n\times m}$ with respect to some bases of $\Mu$ and $\Lambda$.
Then, after endowing $\Mu^{*}$ and $\Lambda^{*}$ with their respective dual bases, the restriction $r\colon\Mu^{*}\to\Lambda^{*}$ is represented by the matrix $A^{\top}\in\Z^{m\times n}$.
Now, by Smith normal form, $A^{\top}$ is surjective iff $A^{\top}=P[E_{m}|0]Q$ for some invertible $P\in\Z^{m\times m},Q\in\Z^{n\times n}$, where $E_{m}$ denotes the identity matrix of size $m\times m$.
Thus, $A^{\top}$ is surjective iff $A=Q^{\top}\left[\frac{E_{m}}{0}\right]P^{\top}$, which, by Smith normal form again, is in turn equivalent to $\Mu/\Lambda$ being torsion-free.

(1)$\Leftrightarrow$(3):
Suppose $(\Lambda\otimes\Q)\cap\Mu=\Lambda$, and assume for the sake of a contradiction that $\Mu/\Lambda$ contains torsion.
Then there exists some $x\in\Mu\setminus\Lambda$ with the property that $kx\in\Lambda$ for some $k\geq 2$, which implies that $x\in(\Lambda\otimes\Q)\cap\Mu\subset\Lambda$, a contradiction.

Conversely, suppose that $\Mu/\Lambda$ is torsion-free.
Because $\Lambda\subset(\Lambda\otimes\Q)\cap\Mu$, it suffices to show that $(\Lambda\otimes\Q)\cap\Mu\subset\Lambda$.
Pick $x\in(\Lambda\otimes\Q)\cap\Mu$.
Then there exists $k\geq 2$ such that $kx\in\Lambda\cap\Mu=\Lambda$.
Since $\Mu/\Lambda$ is free, we must therefore have $x\in\Lambda$, too.
\end{proof}

\begin{remark}\label{rem_primitivity}
For later reference, we point out that, using characterization (3) from Lemma \ref{primitivity}, it is easily verified that the orthogonal complement $\Lambda^{\perp}$ to any sublattice $\Lambda\subset\Mu$ is a primitive sublattice, and, moreover, that the only full-rank primitive sublattice of a lattice $\Mu$ is, in fact, $\Mu$ itself.
\end{remark}

Suppose now that $X$ is a $4$-manifold that is a plumbing of disk bundles over $S^{2}$ along a weighted tree $\Gamma$ with $n$ vertices.
That is, the weight of a vertex is the normal Euler number of the corresponding disk bundle, and two disk bundles are plumbed together precisely when the corresponding vertices are connected by an edge in $\Gamma$.
Suppose further that $Y=\partial X$ is a rational homology $3$-sphere.
Then the intersection form $Q_{X}$ endows $H_{2}(X;\Z)\cong\Z^{n}$ with the structure of an integral lattice $\Lambda$, and, moreover, the dual lattice is given by $\Lambda^{*}=H^{2}(X;\Z)$ (see e.g. \cite[Sections 1.2 and 6.1]{Gompf-Stipsicz}).

In the case where $Y=L(p,q)$, one can choose $X$ to be the plumbing along a linear graph with weights $a_{1},\dots,a_{n}$, where the $a_{i}\geq 2$ are such that
$$
[a_{1},\dots,a_{n}]^{-}:=a_{1}-\dfrac{1}{a_{2}-\dfrac{1}{\cdots-\dfrac{1}{a_{n}}}}=\dfrac{p}{q}.
$$
We denote this plumbing by $X(p,q)$.

\begin{definition}\label{linlattice}
A lattice $\Lambda$ is a \emph{linear lattice} if it admits a basis $\{v_{1},\dots,v_{n}\}$ such that
\begin{equation}
\label{stdbasis}
v_{i}\cdot v_{j}=
\begin{cases}
a_{i}, & \text{if } i=j,\\
0\text{ or }1, & \text{if } |i-j|=1,\\
0, & \text{if } |i-j|>1,
\end{cases}
\end{equation}
for some $a_{1},\dots,a_{n}\geq 2$.
In the case where $v_{i}\cdot v_{j}=1$ whenever $|i-j|=1$, we write $\Lambda=\Lambda(a_{1},\dots,a_{n})$ or $\Lambda=\Lambda(p/q)$, where $p/q=[a_{1},\dots,a_{n}]^{-}$.
Moreover, we will write $\Lambda(\dots,2^{[k]},\dots)=\Lambda(\dots,\underbrace{2,\dots,2}_{k},\dots)$.
\end{definition}

\begin{remark}\label{linlattice_rem}
We state without proof two facts about linear lattices that we implicitly use throughout this paper.
\begin{enumerate}
\item Given a linear lattice $\Lambda(p/q)$, $p/q>1$, the integers $a_{1},\dots,a_{n}$ are uniquely determined by the conditions $a_{i}\geq 2$, $i=1,\dots,n$, and $[a_{1},\dots,a_{n}]^{-}=p/q$ (see e.g. \cite[Section 5.2]{Gompf-Stipsicz}).
\item For $p/q>1$, the intersection lattice of $X(p,q)$ is isometric to $\Lambda(p/q)$.
In fact, $\partial X(p,q)\cong\partial X(r,s)$ if and only if $\Lambda(p/q)\cong\Lambda(r/s)$ (see e.g. \cite[Section 1.5]{Saveliev}, and \cite[Theorem 3]{Gerstein} and \cite[Proposition 3.6]{Josh_LRP}).
\end{enumerate}
\end{remark}

In what follows, we will always endow a linear lattice $\Lambda(a_{1},\dots,a_{n})$ with the standard basis $\{v_{1},\dots,v_{n}\}$ satisfying \eqref{stdbasis}, and similarly for orthogonal direct sums of linear lattices.

\subsection{Embeddings of linear lattices}\label{Embeddings_subs}

We now briefly recall a few definitions from \cite{Lisca1} and \cite{Lisca2} that will play a role later in the paper.

\begin{definition}\label{big_def}
Let $S=\{v_{1},\dots,v_{N}\}\subset\Z^{N}.$
\begin{enumerate}
\item
$S$ is called a \emph{linear subset} if its elements satisfy
$$
v_{i}\cdot v_{j}=
\begin{cases}
a_{i}, & \text{if } i=j,\\
0\text{ or }1, & \text{if } |i-j|=1,\\
0, & \text{if } |i-j|>1,
\end{cases}
$$
for some $a_{i}\geq 2$.
\item
If $S$ is a linear subset, its \emph{intersection graph} is defined as the graph with one vertex for each element $v_{i}\in S$ and an edge $(v_{i},v_{j})$ precisely if $v_{i}\cdot v_{j}=1$.
The number of connected components of the intersection graph is denoted by $c(S)$.
\item
Two elements $v,w\in\Z^{N}$ are said to be \emph{linked} if there exists an index $k\in\{1,\dots,N\}$ such that $e_{k}\cdot v\neq 0$ and $e_{k}\cdot w\neq 0$.
Moreover, a linear subset $S$ is called \emph{irreducible} if for any $v,w\in S$ there exist $v_{1},\dots,v_{m}\in S$ such that $v_{1}=v$, $v_{m}=w$, and $v_{i}$ and $v_{i+1}$ are linked, $i=1,\dots,m-1$.
If $S$ is not irreducible, it is called \emph{reducible}.
\item
Let $S=\{v_{1},\dots,v_{N}\}$ be a linear subset such that $|v_{i}\cdot e_{j}|\leq 1$ for all $i,j \in\{1,\dots,N\}$, and suppose that there exist $h,s,t\in\{1,\dots,N\}$ such that $v_{t}\cdot v_{t}>2$ and $e_{h}\cdot v_{j}\neq 0$ if and only if $j\in\{s,t\}$.
Define $S'\subset\langle e_{1},\dots,e_{h-1},e_{h+1},\dots,e_{N}\rangle$ by  $S':=S\setminus\{v_{s},v_{t}\}\cup\{v_{t}-(e_{h}\cdot v_{t})e_{h}\}.$
Then $S'$ is said to be obtained from $S$ by a \emph{contraction}, and, conversely, $S$ is said to be obtained from $S'$ by an \emph{expansion}.
\item
If, in addition to the hypotheses of (4), both the vertices of the intersection graph of $S$ corresponding to $v_{s}$ and $v_{t}$ have degree $1$, and, moreover, $v_{s}\cdot v_{s}=2$, we say that $S'$ is obtained from $S$ by a \emph{$2$-final contraction}, and that $S$ is obtained from $S'$ by a \emph{$2$-final expansion}.
\item
Let $S'=\{v_{1},\dots,v_{N}\}\subset\Z^{N}$ be a linear subset and suppose that there exists $1<t<N$ such that $C'=\{v_{t-1},v_{t},v_{t+1}\}$ is a connected component of the intersection graph of $S'$ satisfying $v_{t-1}\cdot v_{t-1}=v_{t+1}\cdot v_{t+1}=2$, $v_{t}\cdot v_{t}>2$ and $\{i\mid v_{i}\cdot e_{j}\neq 0\}=\{t-1,t,t+1\}$, for some $1\leq j\leq N$.
Let $S\subset\Z^{M}$ be a subset of cardinality $M\geq N$ obtained from $S'$ by applying a sequence of $2$-final expansions to the connected component $C'$ of $S'$.
Then the connected component $C$ of of the intersection graph of $S$ that naturally corresponds to $C'$ is said to be a \emph{bad component} of $S$.
The number of bad components of $S$ is denoted by $b(S)$.
\end{enumerate}
\end{definition}

A few remarks about these definition are in order.
First, note that a subset $S\subset\Z^{N}$ is linear if and only if $\langle S\rangle\subset\Z^{N}$ is a linear lattice (with the pairing induced by that of $\Z^{N}$).
Moreover, it is not hard to see that if $S'$ is obtained from a linear subset $S$ by a contraction or an expansion, then $S'$ is a linear subset as well.
Lastly, the notion of a bad component will be crucial in the proof of Theorem \ref{thm1}.
Loosely speaking, if $S=\{v_{1},\dots,v_{N}\}\subset\Z^{N}$ is a linear subset corresponding to an isometric embedding $\Lambda\hookrightarrow\Z^{N}$, then a bad component $C=\{v_{t-1},v_{t},v_{t+1}\}$ of $S$ with $v_{t}\cdot v_{t}=n+1$, $n\geq 2$, corresponds to a direct summand of $\Lambda$ of the form $\Lambda(p/q)$, $p/q\in\Eff_{n}$.

We conclude this subsection by proving two lemmas that will be used in the proofs of the main theorems.
We point out that the results of these lemmas are not entirely novel, but are rather reformulations of results that are implicit in \cite{Lisca1,Lisca2}.

The first lemma deals with the orthogonal complement to a bad component of a linear subset.

\begin{lemma}\label{T_emb}
Let $S=\{v_{1},\dots,v_{N}\}\subset\Z^{N}$ be a linear subset and suppose that the intersection graph of $S$ has a bad component $C=\{v_{t-1},v_{t},v_{t+1}\}$, so that $v_{t}\cdot v_{t}=m+1$, for some $m\geq 2$.
Then, with respect to some orthonormal basis of $\Z^{n}$, we have that
\begin{equation*}
C=\langle e_{m+1}+e_{m+2},e_{1}+\cdots+e_{m+1},e_{m+1}-e_{m+2}\rangle\subset\Z^{N}.
\end{equation*}
Moreover, if $C'\subset\Z^{N+K}$ is a linear subset that is obtained from $C$ by a sequence of $K$ $2$-final expansions, $K\geq 0$, then $\langle C'\rangle^{\perp}\simeq \Lambda(m/(m-1))$.
\end{lemma}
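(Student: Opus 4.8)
The plan is to establish the two assertions separately: first the normal form for a bad component $C$ as a subset of $\mathbb{Z}^N$, then the computation of the stable isometry type of $\langle C'\rangle^\perp$ under $2$-final expansions. For the first part, I would start from the defining data of a bad component: by Definition \ref{big_def}(6), $C=\{v_{t-1},v_t,v_{t+1}\}$ is obtained from a three-element component $C'_0=\{w_{t-1},w_t,w_{t+1}\}$ with $w_{t-1}\cdot w_{t-1}=w_{t+1}\cdot w_{t+1}=2$, $w_t\cdot w_t>2$, all three supported on a common basis vector, by a sequence of $2$-final expansions; but since a $2$-final expansion increases $w_t\cdot w_t$ by exactly $1$ each time (the new contracted vector $v_t-(e_h\cdot v_t)e_h$ has square one less than $v_t$), starting from the minimal case we may as well take the minimal bad component with $w_t\cdot w_t = m+1$ directly, or — more cleanly — just exhibit the claimed configuration and check it is a valid linear subset realizing the right intersection data. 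Concretely, in an orthonormal basis $e_1,\dots,e_{m+2}$ of (a summand of) $\mathbb{Z}^N$, set $x=e_{m+1}+e_{m+2}$, $y=e_1+\cdots+e_{m+1}$, $z=e_{m+1}-e_{m+2}$; then $x\cdot x = z\cdot z = 2$, $y\cdot y = m+1$, $x\cdot y = y\cdot z = 1$, $x\cdot z = 0$, so the intersection graph of $\{x,y,z\}$ is the path $x - y - z$ with the required weights, the support condition $\{i : v_i\cdot e_j\neq 0\}=\{t-1,t,t+1\}$ holds with $j=m+2$, and $|v_i\cdot e_j|\le 1$ throughout. The content here is really a rigidity statement: up to the action of the orthogonal group of $\mathbb{Z}^N$ (signed permutations), any bad component with these parameters has this form — this should follow from the classification of such "minimal" linear subsets in \cite{Lisca1,Lisca2}, and I would cite the relevant lemma there rather than reprove it.

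For the second and main part, observe first that $\langle C\rangle$ has rank $3$ and sits inside the rank-$(m+2)$ sublattice $\langle e_1,\dots,e_{m+2}\rangle$; I would compute $\langle C\rangle^\perp$ inside $\mathbb{Z}^{m+2}$ explicitly and then note that the $e_i$ for $i>m+2$ contribute a direct $\mathbb{Z}^{N-m-2}$ summand that is irrelevant up to stable isometry. A vector $\sum c_i e_i$ is orthogonal to $z=e_{m+1}-e_{m+2}$ iff $c_{m+1}=c_{m+2}$, orthogonal to $x=e_{m+1}+e_{m+2}$ iff $c_{m+1}=-c_{m+2}$, so together $c_{m+1}=c_{m+2}=0$; then orthogonality to $y=e_1+\cdots+e_{m+1}$ becomes $c_1+\cdots+c_m=0$. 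Hence $\langle C\rangle^\perp \cap \mathbb{Z}^{m+2} = \{\sum_{i=1}^m c_i e_i : \sum c_i = 0\}$, which is the root lattice $A_{m-1}$; and it is a standard fact that $A_{m-1}\cong\Lambda(m/(m-1))$ (indeed $[\,\underbrace{2,\dots,2}_{m-1}\,]^- = m/(m-1)$, and the basis $e_1-e_2, e_2-e_3,\dots,e_{m-1}-e_m$ realizes the linear-lattice pairing \eqref{stdbasis}). This gives $\langle C\rangle^\perp \simeq \Lambda(m/(m-1))$, establishing the $K=0$ case.

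The heart of the lemma is then showing that a single $2$-final expansion does not change $\langle C'\rangle^\perp$ beyond an orthogonal $\mathbb{Z}$-summand; iterating gives the general $K$. Here I would argue directly at the level of lattices: suppose $C'' \subset \mathbb{Z}^{N+1}$ is obtained from $C'\subset\mathbb{Z}^N$ by a $2$-final expansion, so — unwinding Definition \ref{big_def}(4)--(5) — there is a new basis vector $e_h$, the "expanded" vectors are $v_s$ (with $v_s\cdot v_s = 2$, a leaf) and a new $v_t' = v_t + (e_h\cdot v_t)e_h$ replacing $v_t$, with $v_t$ the old vector of square $v_t\cdot v_t$, $v_t'$ of square one more, and $e_h$ linked only to $v_s$ and $v_t'$. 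The key computation is that $\langle C''\rangle^\perp$ inside $\mathbb{Z}^{N+1}$ is isometric to $\langle C'\rangle^\perp \oplus \mathbb{Z}$: a vector $\xi + c e_h$ (with $\xi\in\mathbb{Z}^N$) lies in $\langle C''\rangle^\perp$ iff $\xi$ is orthogonal to all unexpanded members of $C'$, to $v_s$, and to $v_t' = v_t + e_h$ (taking $e_h\cdot v_t = 1$ WLOG after a sign change), the last condition reading $\xi\cdot v_t + c = 0$, i.e. $c$ is determined by $\xi$; so $\langle C''\rangle^\perp \cong \{\xi : \xi \perp (C'\setminus\{v_t\})\cup\{v_s\}\}$ with a modified pairing, and one checks this is $\langle C'\rangle^\perp \oplus \mathbb{Z}e_h'$ for a suitable norm-one $e_h'$ — the point being that removing $v_t$ while adjoining $v_s$ trades one constraint for another in a norm-preserving way, or, more robustly, one invokes that $2$-final expansion is precisely designed so that $\langle C''\rangle$ and $\langle C'\rangle\oplus\mathbb{Z}$ have the same orthogonal complement up to the ambient $\mathbb{Z}$. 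I expect this last bookkeeping to be the main obstacle: getting the indices and the single sign convention $e_h\cdot v_t = \pm 1$ straight, and verifying the splitting of the complement cleanly rather than just its rank. An alternative, possibly slicker route is to use Lemma \ref{primitivity}(3) together with the fact that $2$-final expansions preserve primitivity of the relevant sublattice, and compute discriminant groups to pin down the stable isometry type — but the direct approach above is more self-contained, so I would pursue it, falling back on the discriminant-form argument if the explicit splitting gets unwieldy.
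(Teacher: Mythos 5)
Your $K=0$ computation (the complement is the root lattice $A_{m-1}\cong\Lambda(2^{[m-1]})=\Lambda(m/(m-1))$, plus an irrelevant $\Z^{N-m-2}$) is correct and agrees with the paper. The genuine gap is the inductive step, which is where all the content of the second assertion lies. Your key claim --- that if $C''\subset\Z^{N+1}$ is obtained from $C'\subset\Z^{N}$ by one $2$-final expansion then $\langle C''\rangle^{\perp}\cong\langle C'\rangle^{\perp}\oplus\Z$ --- is already wrong on the level of ranks: an expansion adds one new coordinate \emph{and} one new element of the subset, so $\rk\langle C''\rangle^{\perp}=(N+1)-(|C'|+1)=\rk\langle C'\rangle^{\perp}$, and the correct statement is that the complement is unchanged, with no extra $\Z$-summand. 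More seriously, the computation you sketch does not establish it: by the definition of an expansion the new leaf $v_{s}$ satisfies $e_{h}\cdot v_{s}\neq 0$, so ``orthogonality of $\xi+ce_{h}$ to $v_{s}$'' is not a condition on $\xi$ alone. Writing $v_{s}=u\pm e_{h}$ and letting $w_{t}\in C'$ be the contracted vector, eliminating $c$ via the $v_{t}$-equation leaves the conditions $\xi\perp C'\setminus\{w_{t}\}$ and $\xi\cdot(u\mp w_{t})=0$, with the \emph{modified} form $|\xi|^{2}+(\xi\cdot w_{t})^{2}$; note that $\xi\perp w_{t}$ is not among the constraints, so this lattice is not visibly $\langle C'\rangle^{\perp}$, and identifying it up to stable isometry is exactly the point --- which you explicitly leave open (``the main obstacle''). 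Moreover, as a statement about arbitrary linear subsets and arbitrary $2$-final expansions your proposed lemma is stronger than what is needed and not clearly true; the paper avoids it entirely by using the normal form from the first assertion to observe that, up to change of orthonormal basis, there are only \emph{two} ways to apply a $2$-final expansion to $C$ (and similarly at each subsequent stage), and then reading off $\langle C'\rangle^{\perp}=\langle e_{1}-e_{2},\dots,e_{m-1}-e_{m},e_{m+K+3},\dots,e_{N+K}\rangle$ directly in coordinates.

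The first assertion is also not actually proved in your proposal. What the lemma asserts is the rigidity statement, and you only verify that the model configuration is a valid bad component (with a slip: the support coordinate must be $j=m+1$, not $j=m+2$, since $v_{t}\cdot e_{m+2}=0$, and the condition $\{i\mid v_{i}\cdot e_{j}\neq0\}=\{t-1,t,t+1\}$ constrains all of $S$, not only $C$), deferring the rigidity itself to an unspecified citation of Lisca. The paper proves it directly in a few lines: the coefficient bound $|v_{i}\cdot e_{j}|\le 1$ lets one normalize $v_{t}=e_{1}+\cdots+e_{m+1}$, and then the support condition together with $v_{t\pm1}\cdot v_{t\pm1}=2$, $v_{t\pm1}\cdot v_{t}=1$ and $v_{t-1}\cdot v_{t+1}=0$ forces $v_{t\pm1}=e_{m+1}\pm e_{m+2}$ up to a further basis change. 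You should supply this short argument (or a precise reference) and redo the expansion step, either by the paper's explicit enumeration or by carrying out the eliminated-variable computation above and actually exhibiting the stable isometry.
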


\begin{proof}
For the first part, note that, by definition of a bad component, the coefficients of $v_{i}$ are at most $1$ in absolute value, $i\in\{t-1,t,t+1\}$.
We may thus assume that $v_{t}=e_{1}+\cdots+e_{m+1}$.
Using the facts $\{i\mid v_{i}\cdot e_{j}\neq 0\}=\{t-1,t,t+1\}$ for some $1\leq j\leq N$ and $v_{t-1}\cdot v_{t+1}=0$, it is then readily checked that, up to change of orthonormal basis of $\Z^{N}$, it must be the case that $v_{t-1}=e_{m+1}+e_{m+2}$ and $v_{t+1}=e_{m+1}-e_{m+2}$.

For the second part, if $K=0$, so that $C'=C$, it is easily verified that
\begin{equation*}
\langle C\rangle^{\perp}=\langle e_{1}-e_{2},\dots,e_{m-1}-e_{m},e_{m+3},\dots,e_{N}\rangle\simeq\Lambda(2^{[m-1]})\cong\Lambda(m/(m-1)).
\end{equation*}
Consider the case where $K>0$, and suppose that $C$ is given as above.
It is easy to see that there are only two ways of applying a $2$-final expansion to $C$, which (up to change of orthonormal basis of $\Z^{N}$) are given by replacing
$$\langle e_{m+1}+e_{m+2},e_{1}+\cdots+e_{m+1},e_{m+1}-e_{m+2}\rangle\subset\Z^{N}$$
by either
$$
\langle e_{m+2}+e_{m+3},e_{m+1}+e_{m+2},e_{1}+\cdots+e_{m+1},e_{m+1}-e_{m+2}+e_{m+3}\rangle\subset\Z^{N+1}
$$
or
$$
\langle e_{m+1}+e_{m+2}+e_{m+3},e_{1}+\cdots+e_{m+1},e_{m+1}-e_{m+2},-e_{m+2}+e_{m+3}\rangle\subset\Z^{N+1},
$$
and similarly for potential subsequent $2$-final expansions.
Hence
\begin{equation*}
\langle C'\rangle^{\perp}=\langle e_{1}-e_{2},\dots,e_{m-1}-e_{m},e_{m+K+3},\dots,e_{N+K}\rangle\simeq\Lambda(2^{[m-1]})\cong\Lambda(m/(m-1)),
\end{equation*}
and the claim follows.
\end{proof}

The following lemma pertaining to reducible linear subsets will be used in the proof of Theorem \ref{thm2}, where it will allow us to reduce a full-rank embedding of a linear lattice into smaller embeddings.
To clarify the hypotheses of the lemma, note that if a connected sum of lens spaces $L(p_{1},q_{1})\#\cdots\#L(p_{n},q_{n})$ bounds a rational ball $W$, we not only obtain a full-rank isometric embedding
$$\Lambda(p_{1}/q_{1})\oplus\cdots\oplus\Lambda(p_{n}/q_{n})\hookrightarrow\Z^{N}$$
(cf. the proof of Lemma \ref{riblemma}), but, by considering $-W$, we additionally obtain a full-rank isometric embedding
$$\Lambda(p_{1}/(p_{1}-q_{1}))\oplus\cdots\oplus\Lambda(p_{n}/(p_{n}-q_{n}))\hookrightarrow\Z^{N'}.$$
We remark that the existence of both embeddings is necessary for the conclusion of the lemma to hold.

\begin{lemma}\label{reducible}
Let $\Lambda=\Lambda(p_{1}/q_{1})\oplus\cdots\oplus\Lambda(p_{n}/q_{n})$ be a linear lattice, set $\Lambda'=\Lambda(p_{1}/(p_{1}-q_{1}))\oplus\cdots\oplus\Lambda(p_{n}/(p_{n}-q_{n}))$, and suppose that there exist full-rank isometric embeddings $\varphi\colon\Lambda\hookrightarrow\Z^{N}$ and $\varphi'\colon\Lambda'\hookrightarrow\Z^{N'}$.

Then, after possibly permuting $p_{1}/q_{1},\dots,p_{n}/q_{n}$ and switching the roles of $\Lambda$ and $\Lambda'$, $\varphi$ can be decomposed as $\varphi=\varphi_{1}\oplus\widetilde{\varphi}$ in such a way that the linear subset $S_{1}\subset\Z^{N}$ corresponding to $\varphi_{1}$ is irreducible, and $\varphi_{1}$ and $\widetilde{\varphi}$ are full-rank isometric embeddings of one of the following forms:
\begin{enumerate}
\item
$\varphi_{1}\colon\Lambda(p_{1}/q_{1})\hookrightarrow\Z^{N_{1}}$ and $\widetilde{\varphi}\colon\Lambda(p_{2}/q_{2})\oplus\cdots\oplus\Lambda(p_{n}/q_{n})\hookrightarrow\Z^{N_{2}}$, $N_{1}+N_{2}=N$;
\item
$\varphi_{1}\colon\Lambda(p_{1}/q_{1})\oplus\Lambda(p_{2}/q_{2})\hookrightarrow\Z^{N_{1}}$ and $\widetilde{\varphi}\colon\Lambda(p_{3}/q_{3})\oplus\cdots\oplus\Lambda(p_{n}/q_{n})\hookrightarrow\Z^{N_{2}}$, $N_{1}+N_{2}=N$.
\end{enumerate}
\end{lemma}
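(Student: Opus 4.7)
The plan is to decompose the embedding $\varphi$ along the irreducible components of its associated linear subset. Let $S\subset\Z^N$ denote the linear subset given by the image under $\varphi$ of the concatenation of the standard bases of the summands $\Lambda(p_i/q_i)$. Decompose $S$ into its maximal irreducible sub-subsets $S=S^{(1)}\sqcup\cdots\sqcup S^{(k)}$. By definition of linkedness, each $S^{(j)}$ is supported on a subset $A_j\subset\{1,\dots,N\}$ of standard-basis coordinates, and the $A_j$ are pairwise disjoint; full rank of $\varphi$ forces $\bigsqcup_j A_j=\{1,\dots,N\}$, so after reordering the standard basis of $\Z^N$ one obtains an orthogonal splitting $\varphi=\varphi^{(1)}\oplus\cdots\oplus\varphi^{(k)}$ with each $\varphi^{(j)}\colon\langle S^{(j)}\rangle\hookrightarrow\Z^{|A_j|}$ a full-rank embedding. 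Since each summand $\Lambda(p_i/q_i)$ is itself irreducible as a linear subset (its intersection graph being a connected path), its image lies entirely in a single $S^{(j)}$; this groups $\{1,\dots,n\}$ into blocks $B_1,\dots,B_k$, and the conclusion of the lemma is equivalent to the existence of some $B_j$ with $|B_j|\leq 2$, at which point one sets $\varphi_1=\varphi^{(j)}$ and $\widetilde\varphi=\bigoplus_{j'\neq j}\varphi^{(j')}$, landing in case (1) or (2) as appropriate.

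Running the same procedure on $\varphi'$ yields a second partition of $\{1,\dots,n\}$ into blocks $B_1',\dots,B_{k'}'$. The bulk of the argument is to show that, after the allowed permutation of indices and the allowed swap of roles between $\Lambda$ and $\Lambda'$, at least one of the partitions $\{B_j\}$ or $\{B_j'\}$ contains a block of cardinality at most $2$. Equivalently, one has to rule out a situation in which every block on either side has size $\geq 3$. For this I would invoke Lisca's combinatorial analysis from \cite[Sections 3--4]{Lisca2}: an irreducible full-rank embedding $\Lambda(p_{i_1}/q_{i_1})\oplus\cdots\oplus\Lambda(p_{i_\ell}/q_{i_\ell})\hookrightarrow\Z^M$ with $\ell\geq 3$ imposes, via its sequence of $2$-final contractions to a minimal form, rigid constraints on how the matching dual summands $\Lambda(p_{i_j}/(p_{i_j}-q_{i_j}))$ can simultaneously embed with full rank. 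These constraints force the dual embedding $\varphi'$ to split so that the $\ell$ dual summands distribute across several irreducible pieces, at least one of which touches at most $2$ of the summands.

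The main obstacle is making this dichotomy precise. Concretely, one reduces via $2$-final contractions (using the structure of bad components recalled in Subsection \ref{Embeddings_subs} and encoded in Lemma \ref{T_emb}) to a minimal irreducible configuration on each side, and then performs a case analysis showing that a minimal irreducible block carrying $\ell\geq 3$ summands on one side is incompatible with the same phenomenon on the other side. This reprises the crux of Lisca's proof of his connected-sum classification; once transported to that setting, the lemma follows. I expect the enumeration of minimal irreducible configurations supporting $\geq 3$ summands to be the most delicate step, but it is entirely governed by Lisca's existing combinatorics and does not require new input.
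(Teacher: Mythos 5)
Your setup coincides with the paper's: decompose the associated linear subset $S$ into its maximal irreducible pieces, observe that their coordinate supports $A_j$ are pairwise disjoint by the definition of linkedness, that full rank forces $\bigsqcup_j A_j = \{1,\dots,N\}$, and hence obtain an orthogonal splitting $\varphi = \varphi^{(1)}\oplus\cdots\oplus\varphi^{(k)}$ with each $\varphi^{(j)}$ a full-rank embedding into $\Z^{|A_j|}$. You also correctly identify the remaining task: show that, after swapping $\Lambda$ for $\Lambda'$ if necessary, at least one irreducible block carries at most two of the summands $\Lambda(p_i/q_i)$.

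Where your proposal falls short is precisely at that remaining task. You propose to re-derive it by an enumeration of ``minimal irreducible configurations supporting $\geq 3$ summands'' via $2$-final contractions and a case analysis, and you acknowledge this is the delicate step without actually carrying it out. That plan is not wrong in spirit — it reprises the kind of analysis Lisca does — but it amounts to re-proving a substantial combinatorial theorem rather than a proof of the lemma. The paper instead closes the gap in one stroke by invoking \cite[Proposition 4.10]{Lisca2}, which already says that an irreducible linear subset with the quantity $I(S_1) + b(S_1) < 0$ corresponds to a full-rank embedding of at most two linear-lattice summands, and then verifies the numerical hypothesis $I(S_1)+b(S_1) < 0$ by appealing to \cite[Lemma 5.3]{Lisca2}, which is what justifies the freedom to ``switch the roles of $\Lambda$ and $\Lambda'$.'' Your sketch gives no analogue of this hypothesis check, and the dichotomy ``every block on either side has size $\geq 3$ is impossible'' is asserted rather than established. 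To turn your proposal into a proof you would either have to actually execute Lisca's case analysis in full, or recognize that the needed statement is exactly \cite[Proposition 4.10]{Lisca2} together with \cite[Lemma 5.3]{Lisca2} and cite them; the paper does the latter.
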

\begin{proof}
Let $S\subset\Z^{N}$ be the linear subset corresponding to $\varphi$ and write $S=S_{1}\cup\cdots\cup S_{m}$, where the $S_{i}$ are the maximal irreducible subsets of $S$, so that each $S_{i}$ corresponds to the orthogonal direct sum of some of the $\Lambda(p_{1}/q_{1}),\dots,\Lambda(p_{n}/q_{n})$, $i\in\{1,\dots,m\}$.
Since no $v\in S_{i}$ is linked to any $w\in S_{j}$, $i\neq j$, $\varphi$ can be decomposed as an orthogonal direct sum of isometric embeddings $\varphi=\varphi_{1}\oplus\cdots\oplus\varphi_{m}$ such that the linear subset corresponding to $\varphi_{i}$ is $S_{i}\subset\Z^{N}$, $i\in\{1,\dots,m\}$.
Moreover, we can view each $\varphi_{i}$ as a full-rank isometric embedding into $\Z^{N_{i}}\subset\Z^{N}$, where $\Z^{N_{i}}=\langle e_{i}\mid v\cdot e_{i}\neq 0\text{ for some }v\in S_{i}\rangle$.
Indeed, we have $N_{i}=|S_{i}|$, $i\in\{1,\dots,m\}$, by linear independence of the elements of $S$ (cf. \cite[Remark 2.1]{Lisca1}).
It now follows from \cite[Proposition 4.10]{Lisca2} that, up to reordering the $S_{1},\dots,S_{m}$ and the $p_{1}/q_{1},\dots,p_{n}/q_{n}$ and switching the roles of $\Lambda$ and $\Lambda'$, the linear subset $S_{1}\subset\Z^{N_{1}}$ corresponds to a full-rank isometric embedding that is either of the form $\varphi_{1}\colon\Lambda(p_{1}/q_{1})\hookrightarrow\Z^{N_{1}}$, or $\varphi_{1}\colon\Lambda(p_{1}/q_{1})\oplus\Lambda(p_{2}/q_{2})\hookrightarrow\Z^{N_{1}}$.
Indeed, for that proposition to apply, we must ensure that the quantity $I(S_{1})+b(S_{1})$ is negative, which by \cite[Lemma 5.3]{Lisca2} can always be achieved by possibly considering $\varphi'$ instead of $\varphi$ (we refer the reader to \cite[Definition 2.3]{Lisca1} for the definition of $I(S)$).
Setting $\widetilde{\varphi}=\varphi_{2}\oplus\cdots\oplus\varphi_{m}$, we have that $\varphi=\varphi_{1}\oplus\widetilde{\varphi}$ is of the desired form.
\end{proof}

\subsection{Ribbon cobordisms and homology}\label{ribhomo}

In this subsection we state and prove two lemmas that give two conditions for a rational homology cobordism to be ribbon.
The first lemma deals with the order of the homology groups involved, whereas the second one gives a condition on the lattices involved.

\begin{lemma}\label{homlemma}
Let $L_{1}$ and $L_{2}$ be lens spaces, and suppose $W$ is a ribbon cobordism from $L_{1}$ to $L_{2}$.
Then $|H_{1}(L_{2};\Z)|=u^{2}\cdot|H_{1}(L_{1};\Z)|$ for some $u\geq 1$.
\end{lemma}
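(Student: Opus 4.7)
The plan is to pin down the integer homology of $W$ relative to each of its boundary components using the handle decomposition guaranteed by the ribbon hypothesis. First I would observe that since $W$ is obtained from $L_1 \times I$ using only $1$- and $2$-handles, and $\chi(W)=0$ (because $H_*(W;\Q)\cong H_*(L_1;\Q)$ gives Betti numbers $1,0,0,1$), the number of $1$-handles must equal the number of $2$-handles; call this common value $k$. Hence the cellular chain complex $C_*(W,L_1;\Z)$ has the very restrictive form $0 \to \Z^k \xrightarrow{A} \Z^k \to 0$ concentrated in degrees $1$ and $2$, for some matrix $A \in M_k(\Z)$.

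Because $W$ is a rational homology cobordism, $(W,L_1)$ has vanishing rational homology, so $A$ is invertible over $\Q$. This forces $H_2(W,L_1;\Z)=0$ and $H_1(W,L_1;\Z)=\mathrm{coker}\,A$, a finite group of order $|\det A|$. Feeding this into the long exact sequence of the pair $(W,L_1)$, together with $H_2(L_1;\Z)=0$, yields $H_2(W;\Z)=0$ and a short exact sequence
$$
0 \to H_1(L_1;\Z) \to H_1(W;\Z) \to \mathrm{coker}\,A \to 0,
$$
giving $|H_1(W;\Z)| = |\det A|\cdot|H_1(L_1;\Z)|$.

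Next I would invoke Poincaré--Lefschetz duality $H_k(W,L_2;\Z)\cong H^{4-k}(W,L_1;\Z)$ combined with universal coefficients. Since $H_3(W,L_1;\Z)=H_2(W,L_1;\Z)=0$ while $H_1(W,L_1;\Z)=\mathrm{coker}\,A$ is finite, one reads off $H_1(W,L_2;\Z)=0$ and $H_2(W,L_2;\Z)\cong \mathrm{coker}\,A$. The long exact sequence of $(W,L_2)$, together with $H_2(L_2;\Z)=H_2(W;\Z)=0$, then produces
$$
0 \to \mathrm{coker}\,A \to H_1(L_2;\Z) \to H_1(W;\Z) \to 0.
$$
Combining the two order computations yields $|H_1(L_2;\Z)| = |\det A|^2 \cdot |H_1(L_1;\Z)|$, so we may take $u := |\det A| \geq 1$.

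The proof is essentially a bookkeeping exercise, with no serious difficulty; the entire content sits in the very rigid shape of $C_*(W,L_1;\Z)$ imposed by the absence of $3$-handles. The point most deserving of care is applying Poincaré--Lefschetz duality correctly to the split boundary $\partial W = -L_1 \sqcup L_2$, and checking that the $\mathrm{Hom}$ and $\mathrm{Ext}$ contributions in universal coefficients do not introduce spurious free summands that would spoil the order count.
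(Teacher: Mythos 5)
Your argument is correct, and it takes a genuinely different route from the paper. The paper derives the lemma from two black boxes: \cite[Proposition 2.1]{Lidman_etc}, which gives $\pi_1(L_1)\hookrightarrow\pi_1(W)\twoheadleftarrow\pi_1(L_2)$ and hence (using that $\pi_1$ of a lens space is cyclic) the divisibility $|H_1(L_1)|\mid|H_1(L_2)|$; and \cite[Lemma 3]{Casson-Gordon}, which says that $|H_1(-L_1\#L_2)|$ is a perfect square because $-L_1\#L_2$ bounds a rational homology ball. Combining these yields the result. Your proof instead exploits the handle structure directly: the relative chain complex $C_*(W,L_1;\Z)$ is a square matrix $A$ that is invertible over $\Q$, the long exact sequence of $(W,L_1)$ gives $|H_1(W)|=|\det A|\cdot|H_1(L_1)|$, and Poincar\'e--Lefschetz duality $H_k(W,L_2)\cong H^{4-k}(W,L_1)$ together with universal coefficients gives $H_1(W,L_2)=0$ and $H_2(W,L_2)\cong\operatorname{coker}A$, whence $|H_1(L_2)|=|\det A|\cdot|H_1(W)|=|\det A|^2\cdot|H_1(L_1)|$. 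Your approach is more self-contained and elementary, and it proves the result outright for arbitrary rational homology $3$-spheres; the paper notes this generalization only in a remark, again by appealing to the $\pi_1$ machinery. It also gives a cleaner interpretation of $u$ as $|\det A|$, the order of the torsion cokernel of the relative chain map. The one point worth making explicit in a polished write-up is the splitting $\partial W=-L_1\sqcup L_2$ in the duality isomorphism and the finiteness of $\operatorname{coker}A$ (so that the $\operatorname{Hom}$ term in universal coefficients vanishes and the $\operatorname{Ext}$ term has the same order), but you already flag exactly this and the computation goes through.
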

\begin{proof}
Set $n_{i}=|H_{1}(L_{i};\Z)|$.
By \cite[Proposition 2.1]{Lidman_etc}, we have that
$$
\pi_{1}(L_{1})\hookrightarrow\pi_{1}(W)\twoheadleftarrow\pi_{1}(L_{2}).
$$
Since $L_{i}$ is a lens space, we have that $\pi_{1}(L_{i})=H_{1}(L_{i};\Z)$, $i=1,2$, and it follows that $n_{1}|n_{2}$, i.e. $n_{2}=kn_{1}$ for some $k\geq 1$.
Moreover, since $-L_{1}\#L_{2}$ bounds a rational homology ball, we must have that $n_{1}n_{2}=m^{2}$, for some $m\geq 1$ \cite[Lemma 3]{Casson-Gordon}.
Thus, $n_{1}n_{2}=kn_{1}^{2}=m^{2}$, so $k$ must be a perfect square and the claim follows.
\end{proof}

\begin{remark}
By a simpler version of the argument used in the proof of \cite[Proposition 2.1]{Lidman_etc} (which is modeled on the one used in the proof of \cite[Lemma 3.1]{Gordon}), one can show that the conclusion of Lemma \ref{homlemma} continues to hold when $L_{i}$ is replaced by any rational homology $3$-sphere $Y_{i}$, $i=1,2$.
\end{remark}

\begin{lemma}\label{riblemma}
Let $Y_{1},Y_{2}$ be oriented rational homology spheres such that $-Y_{1}$ and $Y_{2}$ bound positive-definite $4$-manifolds $X_{1}$ and $X_{2}$, respectively, with $H_{1}(X_{2};\Z)=0$, and let $\Lambda_{i}$ denote the intersection lattice of $X_{i}$, $i=1,2$.
If $Y_{1}\leq Y_{2}$, then there exists a full-rank isometric embedding $\varphi\colon\Lambda_{1}\oplus\Lambda_{2}\hookrightarrow \Z^{N}$ with the property $\varphi(\Lambda_{1})=\varphi(\Lambda_{2})^{\perp}$.
\end{lemma}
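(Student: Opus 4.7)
The plan is to construct a closed positive-definite $4$-manifold $Z$ whose intersection lattice realizes the desired embedding, and then to extract primitivity from the ribbon hypothesis combined with $H_{1}(X_{2})=0$. Specifically, I would form $Z := X_{1}\cup_{-Y_{1}}(-W)\cup_{Y_{2}}X_{2}$, a closed oriented $4$-manifold. Since $W$ is a rational homology cobordism between rational homology $3$-spheres, a Mayer-Vietoris argument with $\Q$-coefficients yields $H_{2}(Z;\Q)\cong H_{2}(X_{1};\Q)\oplus H_{2}(X_{2};\Q)$, so $b_{2}(Z)=\rk\Lambda_{1}+\rk\Lambda_{2}=:N$. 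The restriction of $Q_{Z}$ to each summand agrees with the given positive-definite form $Q_{X_{i}}$, so $Z$ is positive-definite. By Donaldson's theorem, $(H_{2}(Z;\Z)/\mathrm{tors},Q_{Z})\cong(\Z^{N},\mathrm{std})$, and the inclusions $X_{i}\hookrightarrow Z$ induce a full-rank isometric embedding $\varphi\colon\Lambda_{1}\oplus\Lambda_{2}\hookrightarrow\Z^{N}$. The orthogonality $\varphi(\Lambda_{1})\perp\varphi(\Lambda_{2})$ follows because $X_{1}$ and $X_{2}$ sit as disjoint codimension-zero submanifolds of $Z$.

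To upgrade orthogonality to equality $\varphi(\Lambda_{1})=\varphi(\Lambda_{2})^{\perp}$, observe that both sides have rank $N-\rk\Lambda_{2}$, so by Lemma \ref{primitivity} it suffices to show that $\varphi(\Lambda_{1})$ is a primitive sublattice of $\Z^{N}$. Here the ribbon hypothesis enters: let $V$ denote the closure of $Z\setminus X_{1}$, i.e., $V=(-W)\cup_{Y_{2}}X_{2}$, so that $\partial V\cong Y_{1}$. Dualizing the ribbon handle decomposition of $W$ on $Y_{1}\times I$ (which uses only $1$- and $2$-handles) shows that $V$ is obtained from $X_{2}$ by attaching only $2$- and $3$-handles. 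Since $H_{1}(X_{2};\Z)=0$ by hypothesis, and neither a $2$-handle (whose attaching curve is necessarily nullhomologous in a manifold with $H_{1}=0$) nor a $3$-handle can create new classes in $H_{1}$, we conclude that $H_{1}(V;\Z)=0$.

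The long exact sequence of the pair $(Z,X_{1})$, combined with excision and Poincaré-Lefschetz duality, gives an exact sequence $H_{2}(X_{1})\to H_{2}(Z)\to H_{2}(Z,X_{1})\cong H_{2}(V,\partial V)\cong H^{2}(V)$. The vanishing of $H_{1}(V)$ and the universal coefficient theorem identify $H^{2}(V)$ with $\mathrm{Hom}(H_{2}(V),\Z)$, which is free abelian. Hence the cokernel of $H_{2}(X_{1})\to H_{2}(Z)$ is torsion-free, which forces the torsion subgroup of $H_{2}(Z)$ into the image of $H_{2}(X_{1})$; it then follows that $\varphi(\Lambda_{1})$, being the image of $H_{2}(X_{1})$ in $\Z^{N}=H_{2}(Z)/\mathrm{tors}$, is primitive. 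The main conceptual step is isolating the auxiliary manifold $V$ in such a way that the ribbon hypothesis and $H_{1}(X_{2})=0$ combine to produce $H_{1}(V)=0$; once this vanishing is in hand, Poincaré-Lefschetz duality transports it to the torsion-freeness that is, up to a short formal argument, primitivity.
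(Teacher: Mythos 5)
Your proof is correct and follows essentially the same route as the paper: form the closed positive-definite manifold $Z=X_1\cup W\cup X_2$, apply Donaldson's theorem, and then show primitivity of $\varphi(\Lambda_1)$ via the long exact sequence of the pair $(Z,X_1)$, excision, Poincar\'e--Lefschetz duality, and the vanishing of $H_1(W\cup_{Y_2}X_2;\Z)$ coming from the ribbon hypothesis together with $H_1(X_2;\Z)=0$. The only cosmetic differences are that you work in homology (showing the cokernel of $H_2(X_1)\to H_2(Z)$ is torsion-free) whereas the paper works dually in cohomology (showing the restriction $H^2(Z)\to H^2(X_1)$ is surjective and invoking characterization (2) of Lemma~\ref{primitivity}), and that your parenthetical about the $2$-handle attaching curve being nullhomologous is tangential --- the relevant point is simply that attaching $2$-handles can only add relations to $H_1$ and attaching $3$-handles leaves $H_1$ unchanged.
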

\begin{proof}
The first half of the statement is a consequence of the following standard argument.
Let $W$ be a ribbon cobordism from $Y_{1}$ to $Y_{2}$, and define $Z=X_{1}\cup_{-Y_{1}}W\cup_{Y_{2}} X_{2}$.
Then $Z$ is a closed, orientable, positive-definite $4$-manifold, so by \cite[Theorem 1]{Donaldson} has intersection form isometric to $\Z^{N}$.
We thus obtain a full-rank embedding of intersection lattices $\Lambda_{1}\oplus \Lambda_{2}\hookrightarrow \Z^{N}$.
This proves the first half of the claim.

For the second half, note that $\varphi(\Lambda_{1})$ is a full-rank sublattice of $\varphi(\Lambda_{2})^{\perp}$.
By Remark \ref{rem_primitivity}, it thus suffices to show that $\varphi(\Lambda_{1})\subset\Z^{N}$ is a primitive sublattice.
To this end, note that $(\Z^{N})^{*}=H^{2}(Z;\Z)$ and $\varphi(\Lambda_{1})^{*}=H^{2}(X_{1};\Z)$, and let $r_{1}\colon H^{2}(Z;\Z)\to H^{2}(X_{1};\Z)$ denote the restriction map.
Consider the following portion of the long exact sequence in cohomology of the pair $(Z,X_{1})$:
$$
\xymatrix{
H^{2}(Z;\Z) \ar[r]^{r_{1}} & H^{2}(X_{1};\Z)  \ar[r] & H^{3}(Z,X_{1};\Z)\\  
}
$$
By excision and Poincar{\'e} duality, respectively, we have that
$$H^{3}(Z,X_{1};\Z)\cong H^{3}(W\cup_{Y_{2}}X_{2},Y_{1};\Z)\cong H_{1}(W\cup_{Y_{2}}X_{2};\Z)=0,$$
where the latter term vanishes because $H_{1}(X_{2};\Z)=0$ and $W\cup_{Y_{2}}X_{2}$ can be built from $X_{2}$ without using $1$-handles (as seen by turning the cobordism $W$ upside down).
Hence $r_{1}$ surjects and $\varphi(\Lambda_{1})\subset\Z^{N}$ is a primitive sublattice by Lemma \ref{primitivity}.
\end{proof}

\section{The proof of Theorem \ref{thm1}}\label{proof1}

The main ingredient to the proof of Theorem \ref{thm1} is the following proposition, which essentially deals with the case where the linear subset $S\subset\Z^{N}$ coming from a ribbon cobordism contains bad components.

\begin{proposition}\label{key}
Let $W$ be a ribbon cobordism from  $L(p,q)$ to $L(r,s)$, where $r/s\in\Eff_{n}$, for some $n\geq 2$, and $p\neq r$.
Suppose further that the linear subset $S\subset\Z^{N}$ associated to the corresponding embedding $\varphi\colon\Lambda(p/(p-q))\oplus\Lambda(r/s)\hookrightarrow\Z^{N}$ is irreducible.
Then we must have that $L(p,q)\cong L(n,1)$.
\end{proposition}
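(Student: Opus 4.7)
The plan is to leverage Lisca's classification of irreducible full-rank embeddings of orthogonal sums of linear lattices, combined with the ribbon orthogonality condition supplied by Lemma \ref{riblemma} and the complement calculation of Lemma \ref{T_emb}.

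\emph{Reduction to minimal form.} First I would apply $2$-final contractions to $S$ as long as possible, producing a minimal irreducible linear subset $S_0\subset\Z^{N_0}$ corresponding to a full-rank embedding
$$
\varphi_0\colon\Lambda(p/(p-q))\oplus\Lambda(r/s)\hookrightarrow\Z^{N_0}.
$$
Write $S_0 = U_0\cup T_0$, where $U_0$ and $T_0$ span the images of $\Lambda(p/(p-q))$ and $\Lambda(r/s)$, respectively. Throughout, I would track how the stable isometry type of $\varphi(\Lambda(r/s))^{\perp}$ transforms under contractions (it is preserved, essentially by the same argument used in Lemma \ref{T_emb}).

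\emph{Applying Lisca's classification.} Next I would invoke \cite[Proposition 4.10]{Lisca2}, which pins irreducible full-rank embeddings of an orthogonal sum of two linear lattices into a small list of structural families. The hypothesis $r/s\in\Eff_n$ cuts this list down to those families whose corresponding reducible orthogonal complement calculations come from the $\Eff_n$-side of Lisca's dichotomy. Within this short list there are essentially two relevant families: the \emph{diagonal} family, which corresponds to $L(p,q)\cong L(r,s)$ and is excluded by the hypothesis $p\neq r$; and a family in which $S_0$ is built from a bad component $C\subset S_0$ (in the sense of Definition \ref{big_def}(6)) of middle weight $n+1$ sitting inside $T_0$, with $U_0$ realizing the ``standard'' expansion complement described by Lemma \ref{T_emb}.

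\emph{Identifying $\Lambda(p/(p-q))$.} In this remaining family, Lemma \ref{T_emb} applied to $C$ (and followed back through the $2$-final expansions undoing our contractions) yields the stable isometry
$$
\varphi(\Lambda(r/s))^{\perp}\simeq\Lambda(n/(n-1)).
$$
Combining this with the orthogonality $\varphi(\Lambda(p/(p-q)))=\varphi(\Lambda(r/s))^{\perp}$ from Lemma \ref{riblemma} gives $\Lambda(p/(p-q))\simeq\Lambda(n/(n-1))$. A linear lattice has minimum norm at least $2$ and therefore admits no orthogonal $\Z$-summand, so this stable isometry is in fact an honest isometry $\Lambda(p/(p-q))\cong\Lambda(n/(n-1))$. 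By the uniqueness of continued fraction expansions recorded in Remark \ref{linlattice_rem}(1), this forces $p/(p-q)=n/(n-1)$, i.e., $L(p,q)\cong L(n,1)$.

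\emph{Main obstacle.} The key difficulty lies in the structural step: correctly traversing Lisca's classification to isolate the two families relevant when $r/s\in\Eff_n$, and then verifying that in the non-diagonal family the $\Lambda(r/s)$-part of $S_0$ organizes around a single bad component of the correct middle weight $n+1$ so that Lemma \ref{T_emb} applies verbatim. The hypothesis $p\neq r$ enters crucially here to rule out the diagonal family; without it the case $L(p,q)\cong L(r,s)$ would leave the conclusion false.
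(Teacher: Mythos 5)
Your approach takes essentially the same route as the paper: use Lisca's classification of linear subsets to locate a bad component, apply Lemma~\ref{T_emb} to identify $\varphi(\Lambda(r/s))^{\perp}$, and then invoke the ribbon orthogonality from Lemma~\ref{riblemma} to pin down $\Lambda(p/(p-q))$. Your added observation that a linear lattice has no elements of square one and hence no orthogonal $\Z$-summand is a nice explicit justification for passing from stable isometry to honest isometry, which the paper leaves implicit.

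The main place where your argument is fuzzier than the paper's is the ``applying Lisca's classification'' step. You gesture at isolating ``two families'' from Proposition~4.10 of Lisca, but the paper does something more specific: it notes that the irreducibility hypothesis gives $c(S)=2$, so $b(S)\in\{0,1,2\}$ by \cite[Lemma 5.2]{Lisca2}, and then runs through the three subcases from the proof of Lisca's main theorem. You have only two cases. The subcase $b(S)=2$ --- where \emph{both} $S_{1}$ and $S_{2}$ are bad components --- is absent from your proposal; in that regime one must first conclude (from Lisca's third subcase) that $r/s\in\Eff_{2}$, forcing $n=2$, before applying Lemma~\ref{T_emb} with middle weight $3$ to get $L(p,q)\cong L(2,1)$. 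The paper also opens by invoking the main theorem of \cite{Lisca2} to reduce to the cases $p/q=n/1$ or $p/q\in\Eff_n$ or $p/(p-q)\in\Eff_n$, which you skip. None of this is a fatal gap --- the overall strategy and the two key lemmas are the right ones --- but as written the middle step leans on ``traversing Lisca's classification'' without matching the paper's explicit bad-component count, and it silently drops the two-bad-component subcase that Lisca's machinery requires you to address.
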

\begin{proof}
Set $L_{1}=L(p,q)$, $L_{2}=L(r,s)$ and $\Lambda_{1}=\Lambda(p/(p-q))$, $\Lambda_{2}=\Lambda(r/s)$.
Write $S=S_{1}\cup S_{2}$, where $S_{i}\subset\Z^{N}$ is the linear subset corresponding to $\varphi(\Lambda_{i})$, so that $\Lambda_{i}=\langle S_{i}\rangle$, $i=1,2$.

Since $r/s\in\Eff_{n}$, for some $n\geq 2$, the main result of \cite{Lisca2} implies that we must have that either $p/q=n/1$, or that either $p/q$ or $p/(p-q)$ belongs to $\Eff_{n}$ (where $p/(p-q)\in\Eff_{n}$ can only happen if $n=2$).
If $p/q=n/1$ we are done, so we assume that $L_{1}$ is not homeomorphic to a lens space of the form $L(n,1)$.
In what follows, we determine the stable isometry type of $\varphi(\Lambda_{1})=\varphi(\Lambda_{2})^{\perp}$, which, by Remark \ref{linlattice_rem}, determines the oriented homeomorphism type of $L_{1}$.

Since $c(S)=2$, it follows from \cite[Lemma 5.2]{Lisca2} that $b(S)\in\{0,1,2\}$.
If $b(S)=0$, then by the first subcase of the proof in \cite[p. 2160]{Lisca2}, we have that $L_{1}\cong L_{2}$, which contradicts our assumption on $p$ and $r$.
Therefore, we must have that $b(S)\in\{1,2\}$.
Suppose first that $b(S)=1$, so that either $S_{1}$ or $S_{2}$ is a bad component.
Using the facts $L_{1}\ncong L(n,1)$ and $p\neq r$, it follows from the second subcase of the proof in \cite[p. 2160]{Lisca2}, that, after possibly replacing $W$ by $-W$, the bad component of $S$ is, in fact, $S_{2}$ and, moreover, that $S_{2}$ admits a sequence of $2$-final contractions $S_{2}\searrow\cdots\searrow C$, where $C=\{v_{t-1},v_{t},v_{t+1}\}$ with $v_{t}\cdot v_{t}=n+1$.
Thus, by Lemma \ref{T_emb}, we have that $\varphi(\Lambda_{2})^{\perp}=\langle S_{2}\rangle^{\perp}\simeq\Lambda(n/(n-1))$.
It follows that $\varphi(\Lambda_{1})=\varphi(\Lambda_{2})^{\perp}\simeq\Lambda(2^{[n-1]})=\Lambda(n/(n-1))$, which implies that $L_{1}\cong L(n,1)$.

It remains to address the case where $b(S)=2$, so that both $S_{1}$ and $S_{2}$ are bad components.
Then, by the third subcase of the proof in \cite[p. 2162]{Lisca2}, we must have that $r/s\in\Eff_{2}$, and $S_{2}$ admits a sequence of $2$-final contractions $S_{2}\searrow\cdots\searrow C$, where $C=\{v_{t-1},v_{t},v_{t+1}\}$ with $v_{t}\cdot v_{t}=3$.
By the argument used in the previous case, it follows that $L_{1}\cong L(2,1)$.
\end{proof}

\begin{proof}[Proof of Theorem \ref{thm1}]
We first show that the conditions (1)--(3) are necessary.
Set $L_{1}=L(p,q)$, $L_{2}=L(r,s)$ and $\Lambda_{1}=\Lambda(p/(p-q))$, $\Lambda_{2}=\Lambda(r/s)$, and let $W$ be a ribbon cobordism from $L_{1}$ to $L_{2}$.
By Lemma \ref{riblemma}, we obtain a full-rank isometric embedding $\varphi\colon\Lambda_{1}\oplus\Lambda_{2}\hookrightarrow\Z^{N}$ such that $\varphi(\Lambda_{1})=\varphi(\Lambda_{2})^{\perp}$.
Let $S$ denote the corresponding linear subset, so that $\langle S\rangle=\varphi(\Lambda_{1}\oplus\Lambda_{2})$.

Suppose first that $S\subset\Z^{n}$ is irreducible.
It follows from first case of the proof in \cite[p. 2160]{Lisca2} that in this case, either $L_{1}\cong L_{2}$, or that (after possibly switching to $-W$ instead of $W$) at least one of $p/q$ and $r/s$ belongs to $\Eff_{n}$, for some $n\geq 2$.
If $L_{1}\cong L_{2}$, case (1) of Theorem \ref{thm1} holds, whereas if $r/s\in\Eff_{n}$, then Proposition \ref{key} implies that $L_{1}\cong L(n,1)$, and case (2) of Theorem \ref{thm1} holds.
If $p/q\in\Eff_{n}$, then, by the main theorem of \cite{Lisca2}, either $L_{2}\cong L(n,1)$ or $r/s\in\Eff_{n}$.
In the former case, however, by definition of $\Eff_{n}$, we have that $|H_{1}(L_{1};\Z)|>|H_{1}(L_{2};\Z)|$, which contradicts Lemma \ref{homlemma}, and we must thus have $r/s\in\Eff_{n}$.
But then, using Proposition \ref{key} again, it follows that $L_{1}\cong L(p,q)\cong L(n,1)$, which is a contradiction, since $n/1\notin\Eff_{n}$ for any $n\geq 2$.

It remains to deal with the case where $S\subset\Z^{n}$ is reducible.
In this case we can write $S=S_{1}\cup S_{2}$, where $S_{1}$ and $S_{2}$ are the maximal irreducible linear subsets that correspond to $\varphi(\Lambda_{1})$ and $\varphi(\Lambda_{2})$, respectively.
Indeed, by Lemma \ref{reducible}, $S$ cannot decompose into three or more maximal irreducible subsets.
Using the irreducibility of $S_{1}$ and the fact that $\varphi(\Lambda_{1})=\varphi(\Lambda_{2})^{\perp}$, it follows that $\varphi(\Lambda_{1})\subset\Z^{N_{1}}$ is a primitive full-rank sublattice, where $\Z^{N_{1}}=\langle e_{i}\mid v\cdot e_{i}\neq 0\text{ for some }v\in S_{1}\rangle\subset\Z^{N}$.
By Remark \ref{rem_primitivity}, we thus have that $\varphi(\Lambda_{1})\cong\Z^{N_{1}}$, which implies that $L_{1}\cong S^{3}$ and, consequently, $L_{2}\cong L(r,s)$ with $r/s\in\Err$.
Thus, case (3) of Theorem \ref{thm1} holds.

Conversely, suppose that $L_{1},L_{2}$ are lens spaces such that case (1), (2) or (3) holds.
In case (1), we can choose the product cobordism $L_{1}\times [0,1]$ to verify that $L_{1}\leq L_{2}$.
In cases (2) and (3), $L_{1}\leq L_{2}$ follows from \cite[Lemma 3.5]{Lisca2} and \cite[Theorem 1.2]{Lisca1}, respectively.
\end{proof}

\section{The proof of Theorem \ref{thm2}}\label{proof2}

In this section, we prove Theorem \ref{thm2}.
While to a large extent it is a consequence of Theorem \ref{thm1}, we will need the following additional result.

\begin{proposition}\label{2summands}
Let $Y=L_{1}\#L_{2}$, where $L_{i}$ is a lens space that does not bound a rational homology ball, $i=1,2$.
If $Y$ bounds a rational homology ball, then $Y$ must be (possibly orientation-reversingly) homeomorphic to one of the following:
\begin{enumerate}
\item $L(p,p-q)\# L(p,q)$, $p/q>1$;
\item $L(n,n-1)\# L(p,q)$, $p/q\in\Eff_{n}$ for some $n\geq 2$;
\item $L(p_{1},p_{1}-q_{1})\# L(p_{2},q_{2})$, $p_{i}/q_{i}\in\Eff_{n}$, $i=1,2$, for some $n\geq 2$; or
\item $L(p_{1},q_{1})\# L(p_{2},q_{2})$, $p_{i}/q_{i}\in\Eff_{2}$, $i=1,2$.
\end{enumerate}
Conversely, if $Y$ is homeomorphic to one of the manifolds in (1)--(4), then $Y$ bounds a \emph{ribbon} rational homology ball.
\end{proposition}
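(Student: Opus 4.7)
The plan is to mimic the strategy of Proposition \ref{key}, with full-rank embeddings coming from a rational ball filling rather than a ribbon cobordism. First, since $Y = L_1 \# L_2$ bounds a rational homology ball $W$, gluing the positive-definite plumbing $X(p_1, q_1) \natural X(p_2, q_2)$ to $W$ yields a closed positive-definite $4$-manifold; by Donaldson's theorem we thus obtain a full-rank isometric embedding
$$
\varphi \colon \Lambda(p_1/q_1) \oplus \Lambda(p_2/q_2) \hookrightarrow \Z^N.
$$
Applying the same construction to $-Y$ bounded by $-W$ furnishes a companion embedding $\varphi' \colon \Lambda(p_1/(p_1-q_1)) \oplus \Lambda(p_2/(p_2-q_2)) \hookrightarrow \Z^{N'}$, so the hypotheses of Lemma \ref{reducible} are met.

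Applying Lemma \ref{reducible}, one of its two cases must hold for $\varphi$ (after possibly swapping $\varphi$ with $\varphi'$ and reordering indices). Case (1) would give a full-rank embedding of a single summand $\Lambda(p_i/q_i)$ into $\Z^{N_1}$, which by \cite[Theorem 1.2]{Lisca1} would force $L_i$ to bound a rational homology ball, contradicting our hypothesis. So case (2) of Lemma \ref{reducible} applies, and the entire linear subset $S \subset \Z^N$ associated with $\varphi$ is irreducible; moreover $c(S) = 2$.

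We are now in the same combinatorial setting as in Proposition \ref{key}: by \cite[Lemma 5.2]{Lisca2}, $b(S) \in \{0, 1, 2\}$, and the three subcases analyzed on \cite[pp.~2160--2162]{Lisca2} exhaust the possibilities. The subcase $b(S) = 0$ forces $L_1 \cong -L_2$, giving case (1). In the subcase $b(S) = 1$, one of $S_1, S_2$ is a bad component of central weight $n+1$ for some $n \geq 2$; combining Lisca's identification of the other summand with Lemma \ref{T_emb} and Remark \ref{linlattice_rem} then yields case (2) when the non-bad summand is $L(n, n-1)$, or case (3) when it merely has parameter in $\Eff_n$ (detected via the dual embedding $\varphi'$). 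The subcase $b(S) = 2$ forces both central weights to equal $3$, so $n = 2$ and we obtain case (4). For the converse, case (1) is realized by the standard ribbon ball for $L \# \overline{L}$, while cases (2)--(4) follow from repeated application of \cite[Lemma 3.5]{Lisca2} combined with ribbon rational balls bounded by lens spaces with parameters in $\Eff_n$ or $\Err$.

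I expect the principal obstacle to be cleanly separating cases (2), (3), and (4) from Lisca's combinatorial subcases; in particular, case (2) versus case (3) is not visible from $\varphi$ alone but depends on whether the companion embedding $\varphi'$ exhibits the non-bad summand of $\varphi$ as itself a bad component, and the swap built into Lemma \ref{reducible} is precisely what handles this bookkeeping.
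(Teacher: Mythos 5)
Your converse argument is essentially the same as the paper's (both come down to composing ribbon cobordisms from \cite[Lemma 3.5]{Lisca2} with the standard ribbon ball for $L\#\overline{L}$; the paper phrases it via branched double covers of ribbon surfaces in $B^4$, which amounts to the same construction). The forward direction, however, is handled very differently — and there is a gap in your version.

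The paper's proof of the forward direction is one sentence: the classification of connected sums of lens spaces bounding rational homology balls is precisely \cite[Theorem 1.1]{Lisca2}, and items (1)--(4) are the specialization of that theorem to two summands neither of which individually bounds a rational ball. You instead try to re-derive this from the embedding $\varphi$, which is both unnecessary and, as executed, contains an error. The key problem is your appeal to Lemma \ref{T_emb}. That lemma computes $\langle C'\rangle^{\perp}$, the orthogonal complement of a bad component in $\Z^{N}$. In the setting of Proposition \ref{key}, this identifies $\varphi(\Lambda_1)$ because the ribbon hypothesis (via Lemma \ref{riblemma}) supplies the equality $\varphi(\Lambda_1)=\varphi(\Lambda_2)^{\perp}$. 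In Proposition \ref{2summands}, $W$ is merely a rational homology ball — not assumed ribbon — so $\varphi(\Lambda_1)$ is only a finite-index sublattice of $\varphi(\Lambda_2)^{\perp}$, and Lemma \ref{T_emb} says nothing about $\Lambda_1$ itself. Fortunately you also invoke ``Lisca's identification of the other summand,'' which is the actual tool and makes Lemma \ref{T_emb} redundant; but then you have just reproduced Lisca's argument, and the cleaner route is to cite his theorem as the paper does. A secondary quibble: to discard case (1) of Lemma \ref{reducible} you need the ``only if'' direction of Lisca's lens-space classification, not \cite[Theorem 1.2]{Lisca1} (which is the converse); that is, a full-rank embedding of both $\Lambda(p/q)$ and $\Lambda(p/(p-q))$ forces $p/q\in\Err$, which is the main theorem of \cite{Lisca1}, not Theorem 1.2.
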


\begin{proof}
The first half of the statement is an immediate consequence of the main theorem of \cite{Lisca2}.

Conversely, suppose that $Y\cong L(p,p-q)\# L(p,q)$, $p/q>1$, and note that in this case $Y$ is the double cover of $S^{3}$ branched along the link $K(p,p-q)\#K(p,q)=\overline{K(p,q)}\#K(p,q)$.
As in the proof of Corollary \ref{cor2}, we have that $\overline{K(p,q)}\#K(p,q)$ bounds a (possibly disconnected) properly embedded surface $C\subset B^{4}$ with $\chi(C)=1$, and such that $C$ has no local maxima with respect to the radial distance function on $B^{4}$.
It follows that $Y$ bounds a ribbon rational homology ball.
The remaining cases are handled similarly (cf. also the proof of Corollary \ref{cor2}).
\end{proof}

\begin{proof}[Proof of Theorem \ref{thm2}]
Let $W$ be a ribbon cobordism from $Y_{1}=L_{1}\#\cdots\#L_{I}$ to $Y_{2}=M_{1}\#\cdots\#M_{J}$, where the $L_{1},\dots,L_{I}$ and $M_{1},\dots,M_{J}$ are lens spaces.
By Lemma \ref{riblemma}, we obtain a full-rank isometric embedding of linear lattices
$$
\varphi\colon\Lambda_{1}\oplus\cdots\oplus\Lambda_{I}\oplus\Mu_{1}\oplus\cdots\oplus\Mu_{J}\hookrightarrow\Z^{N},
$$
such that
\begin{equation}\label{perpsum}
\varphi(\Mu_{1}\oplus\cdots\oplus\Mu_{J})^{\perp}=\varphi(\Mu_{1})^{\perp}\oplus\cdots\oplus\varphi(\Mu_{J})^{\perp}=\varphi(\Lambda_{1})\oplus\cdots\oplus\varphi(\Lambda_{I}).
\end{equation}
Here, $\Lambda_{i}$ and $\Mu_{j}$ correspond to $L_{i}$ and $M_{j}$, respectively, $i=1,\dots,I$, $j=1,\dots,J$.
As in the proof of Proposition \ref{2summands}, it suffices to determine the stable isometry types of the lattices $\Lambda_{1},\dots,\Lambda_{I},\Mu_{1},\dots,\Mu_{J}$.
After possibly replacing $W$ by $-W$, we may apply Lemma \ref{reducible}, so that $\varphi$ decomposes as $\varphi=\varphi_{1}\oplus\widetilde{\varphi}$, where $\varphi_{1}\colon\Lambda\to\Z^{N_{1}}\subset\Z^{N}$ is a full-rank isometric embedding into a direct summand of $\Z^{N}$, such that the corresponding linear subset $S_{1}\subset\Z^{N_{1}}$ is irreducible, and $\Lambda$ is one of the following:
\begin{enumerate}[label=(\roman*)]
\item $\Lambda_{i}$ for some $i\in\{1,\dots,I\}$;
\item $\Lambda_{i}\oplus\Lambda_{i'}$, for some $i,i'\in\{1,\dots,I\}$, $i\neq i'$;
\item $\Mu_{j}$ for some $j\in\{1,\dots,J\}$;
\item  $\Mu_{j}\oplus\Mu_{j'}$, for some $j,j'\in\{1,\dots,J\}$, $j\neq j'$; or
\item  $\Lambda_{i}\oplus\Mu_{j}$, for some $i\in\{1,\dots,I\}$, $j\in\{1,\dots,J\}$.
\end{enumerate}

For cases (i) and (ii), note that, by \eqref{perpsum}, $\varphi(\Lambda)$ is a direct summand of the primitive sublattice $\varphi(\Mu_{1}\oplus\cdots\oplus\Mu_{J})^{\perp}\subset\Z^{N}$.
Thus, $\varphi(\Lambda)\subset\Z^{N_{1}}$ is a full-rank primitive sublattice, which, by Remark \ref{rem_primitivity}, implies that $\Lambda\cong\Z^{N_{1}}\simeq 0$.
It follows that $L_{i}\cong S^{3}$ in case (i), and $L_{i}\#L_{i'}\cong S^{3}$ in case (ii), so we may discard these connected summands from $Y_{1}$.

In case (iii), we have a full-rank isometric embedding $\varphi_{1}\colon\Mu_{j}\to\Z^{N_{1}}$, which by the main theorem of \cite{Lisca1} implies that $M_{j}\cong L(p,q)$ with $p/q\in\Err$, and case (3) of Theorem \ref{thm2} holds.

Similarly, in case (iv) it follows that $M_{j}\#M_{j'}\cong L(p_{1},q_{1})\#L(p_{2},q_{2})$, where $L(p_{1},q_{1})\#L(p_{2},q_{2})$ bounds a rational homology ball, but, since $S_{1}$ is irreducible, $L(p_{k},q_{k})$ does not, $k=1,2$.
By Proposition \ref{2summands}, $M_{j}\#M_{j'}$ bounds a ribbon rational homology ball, and must be homeomorphic to one of the manifolds listed there, so one of the cases (4)--(7) from Theorem \ref{thm2} holds.

Finally, in case (v), we have that $\varphi_{1}\colon\Lambda_{i}\oplus\Mu_{j}\hookrightarrow\Z^{N_{1}}$ is a full-rank isometric embedding which, by \eqref{perpsum} again, has the property that $\varphi_{1}(\Lambda_{i})=\varphi_{1}(\Mu_{j})^{\perp}$.
Since $S_{1}$ is irreducible, we can apply the first half of the proof of Theorem \ref{thm1} to $\varphi_{1}$ to conclude that case (1) or case (2) of Theorem \ref{thm2} must hold.

We can now apply the above procedure to $\widetilde{\varphi}$ and then iterate it, where at each step we may have to replace e.g. $\widetilde{\varphi}$ by $\widetilde{\varphi}'$, where $\widetilde{\varphi}'$ is obtained from $\widetilde{\varphi}$ as in the statement of Lemma \ref{reducible}.
Since at each step we discard a non-zero number of direct summands of $\Lambda_{1}\oplus\cdots\oplus\Lambda_{I}\oplus\Mu_{1}\oplus\cdots\oplus\Mu_{J}$, this process terminates.
\end{proof}


\bibliographystyle{alpha}
\bibliography{ribbon_cobordisms}

\end{document}